\newtheorem{Theorem}{Theorem}[section]
\newtheorem{Lemma}[Theorem]{Lemma}
\newtheorem{Proposition}[Theorem]{Proposition}
\newtheorem{Corollary}[Theorem]{Corollary}
\newtheorem*{Theoremm}{Theorem}
\theoremstyle{remark}
\newtheorem{Remark}[Theorem]{Remark}
\newbox\squ  
\def\Lie{\operatorname{Lie}}
\def\Hom{\operatorname{Hom}}
\def\reg{\operatorname{reg}}
\def\ad{{\operatorname{ad}}}
\def\Ad{{\operatorname{Ad}}}
\def\End{{\operatorname{End}}}
\def\Char{{\operatorname{Char}}}
\def\opp{{\operatorname{opp}}}
\def\Mat{{\operatorname{Mat}}}
\def\rank{{\operatorname{rank}}}
\def\reg{{\operatorname{reg}}}
\def\codim{{\operatorname{codim}}}
\def\Ind{{\operatorname{Ind}}}
\def\Ker{{\operatorname{Ker}}}
\def\Os{{\O_{\operatorname{sr}}}}
\def\C{{\mathbb C}}
\def\Z{{\mathbb Z}}
\def\k{{\mathbbm k}}
\def\O{{\mathcal O}}
\def\N{{\mathcal N}}
\def\D{{\mathcal D}}
\def\0{{\bar 0}}
\def\1{{\bar 1}}
\newcommand{\g}{\mathfrak{g}}
\newcommand{\z}{\mathfrak{z}}
\renewcommand{\l}{\mathfrak{l}}
\def\p{\mathfrak{p}}
\def\g{{\mathfrak g}}
\def\n{\mathfrak n}
\def\p{\mathfrak p}
\def\ty{\tilde y}
\def\bi{\text{\sf {\bf{i}}}}
\newdimen\Hoogte    \Hoogte=10pt    
\newdimen\Breedte   \Breedte=10pt   
\newdimen\Dikte     \Dikte=0.5pt    
\title{Harish-Chandra invariants and the centre of the reduced enveloping algebra}
\author{Lewis W. Topley}
\address{Dipartimento di Matematica, Universita degli Studi di Padova, via Trieste 63, 35121 Padova, Italia}
\email{lewis@math.unipd.it}
\begin{document}

\maketitle

\begin{abstract}
In this article we consider the centre of the reduced enveloping algebra of the Lie algebra of a reductive algebraic group in very good characteristic $p > 2$. 
The Harish-Chandra centre maps to the centre of each reduced enveloping algebra and, using a combination of induction and deformation arguments, we describe
precisely for which $p$-characters this map is surjective: it is if and only if the chosen character is regular. This provides the converse to a theorem of
Mirkovi\'{c} and Rumynin.
\end{abstract}
\medskip
\noindent \textbf{Keywords.} restricted Lie algebras; reduced enveloping algebras.

\medskip
\noindent \textbf{MSC.} primary 17B50; secondary 17B30, 17B05.

\section{Introduction}

Throughout we take $\k$ to be an algebraically closed field of characteristic $p > 2$ and $G$ a connected reductive algebraic group of rank $\ell$ with
simply connected derived subgroup. We also assume that $p$ is very good for $G$.
The Lie algebra $\g$ is restricted in a natural way and the representation theory of $\g$ is governed by that of the
reduced enveloping algebras $U_\chi(\g)$ with $\chi \in \g^*$. When the stabaliser of $\chi$ in $\g$ is a torus the reduced enveloping algebra is semisimple whilst,
at the other end of the spectrum, the simple $U_0(\g)$-modules are precisely the differentials of simple $G$-modules with restricted highest weights.

Since $U_\chi(\g)$ plays a key role in the representation theory of $\g$ it seems important to understand the centre $Z_\chi(\g) := Z(U_\chi(\g))$
and here we make some progress towards that end. For $\chi \in \g^\ast$ we consider the map
$$\varphi_\chi : Z(\g) \longrightarrow Z_\chi(\g)$$
obtained by restricting the projection $U(\g) \twoheadrightarrow U_\chi(\g)$. Recall that the centre $Z(\g)$ of the enveloping
algebra $U(\g)$ is well understood, thanks to the work of several authors. We know that $Z(\g)$ is generated by the $p$-centre $Z_p(\g)$ and the Harish-Chandra invariants
$U(\g)^G$, and the latter has a similar description in characteristic zero and very good positive characteristic. This leads immediately to the question,
when is $\varphi_\chi$ surjective? The two extremal cases of this question are known: when $\chi$ is regular a theorem of Mirkovi\'{c} and Rumynin says that
$\varphi_\chi$ surjects \cite[Theorem~12]{MR}, whilst in the case $\chi = 0$ the map $\varphi_\chi$ does not surject, by an argument due to Premet \cite[\S 3.17]{BG}.
Apart from this very little is known about the cokernel of $\varphi_\chi$ as a linear map, and the goal of this article is to describe precisely when it is zero.

We remind the reader that an element $\chi \in \g^*$ is called regular if the stabaliser $\g_\chi := \{x\in \g : \chi [x, \g] = 0\}$ has the minimal possible dimension $\rank(\g)$. 
The following is our main result. 
\begin{Theoremm}
$Z_\chi(\g) = \varphi_\chi (Z(\g))$ if and only if $\chi$ is a regular element.
\end{Theoremm}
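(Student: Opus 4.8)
The statement is an equivalence and I would prove the two directions separately, after first recasting it in terms of dimensions. Recall from Veldkamp's structure theory of $Z(\g)$ that $Z(\g)$ is a free module of rank $p^\ell$ over the $p$-centre $Z_p(\g)$, and one checks that $\ker\varphi_\chi\cap Z(\g)$ is the ideal of $Z(\g)$ generated by the maximal ideal of $Z_p(\g)$ determined by $\chi$; hence $\dim_\k\varphi_\chi(Z(\g)) = p^\ell$ for \emph{every} $\chi\in\g^\ast$. As $\varphi_\chi(Z(\g))\subseteq Z_\chi(\g)$ always holds, the equality $Z_\chi(\g) = \varphi_\chi(Z(\g))$ is equivalent to $\dim_\k Z_\chi(\g) = p^\ell$. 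One direction is then supplied by the literature: when $\chi$ is regular, \cite[Theorem~12]{MR} gives surjectivity of $\varphi_\chi$, hence $\dim_\k Z_\chi(\g) = p^\ell$. So everything reduces to proving that $\dim_\k Z_\chi(\g) > p^\ell$ whenever $\chi$ is not regular.

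The next step is the Kac--Weisfeiler reduction to nilpotent $p$-characters. Let $\chi = \chi_s + \chi_n$ be the Jordan decomposition; Premet's refinement of the Kac--Weisfeiler theory yields a Levi subalgebra $\l = \g_{\chi_s}$ of a parabolic of $\g$ and a Morita equivalence $U_\chi(\g)\simeq U_{\chi_n}(\l)$. Since centres are Morita invariant this gives $\dim_\k Z_\chi(\g) = \dim_\k Z(U_{\chi_n}(\l))$; moreover $\operatorname{rank}\l = \ell$, so the number $p^\ell$ is the same for $\l$, and $\dim\g_\chi = \dim\l_{\chi_n}$, so $\chi$ is regular in $\g$ exactly when $\chi_n$ is regular in $\l$. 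Thus the desired implication for $(\g,\chi)$ follows from the same implication for $(\l,\chi_n)$; when $\chi_s\neq 0$ this $\l$ is a proper Levi and we finish by induction on $\dim\g$, the remaining case $\chi_s = 0$ being treated directly. We have therefore reduced to: \emph{if $\chi$ is nilpotent but not regular, then $\dim_\k Z_\chi(\g) > p^\ell$.}

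For the nilpotent case I would use a deformation argument, assisted where possible by a further induction. Attach an $\sl_2$-triple $(e,h,f)$ to $\chi$ and invoke Premet's isomorphism identifying $U_\chi(\g)$ with a full matrix algebra over the reduced finite $W$-algebra $U_\chi(\g,e)$, so that $Z_\chi(\g)\cong Z(U_\chi(\g,e))$; the ambient algebra $U_\chi(\g,e)$ has dimension $p^{\dim\g_\chi}$, which strictly exceeds $p^\ell$ exactly because $\chi$ is not regular, and the task is to show that this excess propagates to the centre. When $\chi$ is induced from a proper Levi $\l$, the Kazhdan filtration realises $U(\g,e)$ as a filtered deformation of a commutative ring and provides a comparison with a reduced $W$-algebra of $\l$ (note that inducing the regular nilpotent of a Levi gives the regular nilpotent of $\g$, so the relevant nilpotent of $\l$ is again non-regular), whence the inductive hypothesis together with the Levi reduction above can be brought to bear. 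The genuine obstacle --- and where I expect the main work to lie --- is the case of a rigid nilpotent $\chi$, for which no parabolic description exists: here the extra central elements must be produced directly. The natural way to do this is a true deformation argument: interpolate along the pencil $\{t\chi : t\in\k\}$, or pass to the Rees algebra over $\k[\hbar]$ of the Kazhdan-filtered $W$-algebra, and combine semicontinuity of the dimension of the centre with the already-established regular case so as to force $\dim_\k Z_\chi(\g)$ to jump strictly above $p^\ell$ rather than merely to be bounded below by it. Feeding the nilpotent statement back through the Kac--Weisfeiler reduction then yields the theorem for arbitrary $\chi$.
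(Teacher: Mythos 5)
Your opening reductions coincide with the paper's: $\dim_\k \varphi_\chi(Z(\g)) = p^\ell$ for every $\chi$, so the problem is to show $\dim_\k Z_\chi(\g) > p^\ell$ whenever $\chi$ is not regular; the regular direction is Mirkovi\'{c}--Rumynin; and the Kac--Weisfeiler/Friedlander--Parshall/Premet isomorphism $U_\chi(\g)\cong \Mat_{p^d}U_\chi(\g_{\chi_s})$ together with Lemma~\ref{supportlemma} gives $\dim Z_\chi(\g) = \dim Z_{\chi_n}(\g_{\chi_s})$. Up to this point you and the paper agree. From here on you diverge, and your version contains a genuine gap.

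You reduce first to nilpotent $\chi$ and then try to manufacture the strict inequality for non-regular nilpotent, in particular rigid, $\chi$ by a deformation. But the pencil $\{t\chi : t\in\k\}$ gives nothing: for $t\neq 0$ the algebras $U_{t\chi}(\g)$ all have centres of the same dimension, the only degeneration is to $t=0$, and there semicontinuity says $\dim Z_0(\g)\geq\dim Z_\chi(\g)$ --- an inequality pointing the wrong way. More generally, semicontinuity of $\chi\mapsto\dim Z_\chi(\g)$ plus the fact that the regular locus is open and dense yields only $\dim Z_\chi(\g)\geq p^\ell$ for all $\chi$, never a strict inequality, and you acknowledge this ("rather than merely to be bounded below by it") without supplying a mechanism that forces a jump. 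Appeals to Kazhdan filtrations, Rees algebras or a putative induction from a reduced $W$-algebra of a Levi are too vague to close this; indeed, the paper's Remark after Corollary~\ref{regularcase} records that producing central elements of $U_\chi(\g)$ outside $\varphi_\chi(Z(\g))$ is exactly what one does not know how to do directly: Premet's $\chi=0$ example is the only one.

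The paper sidesteps this by \emph{not} reducing to nilpotent $\chi$. Its key geometric input (Lemma~\ref{inducesub} and Proposition~\ref{decomplineup}) is that, for $G$ simple, $\g\setminus\g_\reg$ is precisely the closure of $\bigcup_i\D_i$ where $\D_i=\D(\l_i,0)$ and $\l_1,\dots,\l_k$ represent the conjugacy classes of Levi subalgebras of semisimple rank $1$. Elements of $\D_i$ are \emph{semisimple}, with centraliser conjugate to $\l_i$; the Kac--Weisfeiler reduction then gives the exact equality $\dim Z_\chi(\g)=\dim Z_0(\l_i)$, and $\dim Z_0(\l_i)>p^\ell$ is Premet's $\chi=0$ result applied to $\l_i$ (which has reductive rank $\ell$, so $p^\ell$ is the correct bound, and for which $p>2$ is very good since its derived subgroup is $SL_2$). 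Semicontinuity (Proposition~\ref{deform}) then carries the strict inequality from each $\D_i$ to its closure, hence to the whole non-regular locus --- in particular to every non-regular nilpotent orbit, rigid or otherwise. The strict inequality thus enters from a \emph{non-nilpotent} direction and is pushed into the nilpotent cone by semicontinuity; this is exactly the route your nilpotent-first reduction cuts off. (The paper's induction on reductive rank serves only to reduce to simple $G$, not to handle the Levi reduction, which lands in a subalgebra of the same rank.)
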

The idea behind the proof is quite simple. To start with we observe that isomorphism type and hence the dimension of the centre of $U_\chi(\g)$ only depends upon
the orbit $\Ad^*(G)\chi$. In the base case $G = SL_2$ or $\mathbb{G}_m$
and coadjoint orbits are either regular or trivial. In these cases the theorem follows from previously established results. When $\ell > 1$, once again the regular orbits
are dealt with by \cite{MR}, whilst every non-regular orbit $\O \subseteq \g^*$ lies in the closure of a non-nilpotent subregular decomposition class. If $\chi$ lies in 
such a decomposition class we show that $Z_\chi(\g) \cong Z_\chi(\g_{\chi_s})$  for $\chi \in \O$ and apply an inductive argument, whilst if $\chi$ lies in the boundary
of this decomposition class we apply a deformation argument to show that the dimension of the centre is larger than $\dim \varphi_\chi(Z(\g))$.

The paper is organised as follows. In Section~\ref{notations} we fix our notation and recall the elements of the theory. We then explain that $\dim \varphi_\chi(Z(\g)) = p^\ell$
and go on to recall an important category equivalence due to Kac--Weisfeiler which allows us
to relate the centre $Z_\chi(\g)$ to $Z_\chi(\g_{\chi_s})$. In Section~\ref{extremal} we recall the two known cases of the theorem and in Section~\ref{deformation}
we recall the theory of decomposition classes. It is here that we show that $\dim Z_\chi(\g)$ does not change as we vary $\chi$ over such a class and prove
a useful result which states that $\g^\ast \rightarrow \Z_{\geq 0}; \chi \mapsto \dim(Z_\chi(\g))$ is upper semicontinuous. In the final section we combine the
ingredients and complete the proof using induction on the reductive rank of $G$.

\noindent {\bf Acknowledgement.}
I would like to thank James Humphreys, David Stewart and Rudolf Tange for reading a preliminary version of this article
and making suggestions to improve the exposition. I would also like to thank Giovanna Carnovale for useful discussions
about reduced enveloping algebras and related topics. Whilst carrying out the research that led to these results the
author benefited from funding from the European Commission, Seventh Framework Programme, under Grant Agreement number
600376, as well as grants CPDA125818/12 and 60A01-4222/15 from the University of Padova.

\section{Notations and preliminaries}\label{notations}

Our notations and assumptions are the following:
\begin{enumerate}
\item{$G$ is a reductive algebraic group over $\k$;}
\smallskip
\item{$\k$ is algebraically closed of characteristic $p > 2$ and $p$ is very good for $G$;}
\smallskip
\item{$G$ is connected and the derived subgroup $[G,G]$ is simply connected.}
\smallskip
\end{enumerate}
We warn the reader that not every Levi subgroup satisfies these same hypotheses. In particular $p$ will not necessarily
be a very good prime for every Levi subgroup. Nonetheless, Lie algebras of Levi subgroups are precisely the centralisers
of semisimple elements (see \cite[2.2(4)]{Ja2} and \cite[Lemma~2.1.2]{CM}).

We always write $\g = \Lie(G)$,
write $U(\g)$ for the enveloping algebra and $S(\g)$ for the symmetric algebra.
We shall denote the centre of $\g$ by $\z(\g)$ and we observe that, since the characteristic of
$\k$ is very good for $G$, we have $\g = \z(\g) \bigoplus [\g, \g]$ and, more generally, $\l = \z(\l) \bigoplus [\l, \l]$
for every Levi subalgebra $\l = \Lie(L) \subseteq \g$ such that $p$ is very good for $L$ (see \cite[\S 2.1]{PS} for slightly more detail).

The maximal ideal of the $p$-centre corresponding to $\chi \in \g^*$ shall be written
$I_\chi := (x^p - x^{[p]} - \chi(x)^p : x\in \g)$. Then the reduced enveloping algebra is $$U_\chi(\g) := U(\g)/ I_\chi U(\g)$$
and it is customary to write $U^{[p]}(\g) = U_0(\g)$. When $\g_0\subseteq \g$ is a restricted
subalgebra, we shall abuse notation by writing
$U_\chi(\g_0)$ to denote $U_{\chi|_{\g_0}}(\g_0)$, which we view as a subalgebra of $U_\chi(\g)$.
Before we proceed we shall need an elementary fact.
\begin{Lemma}\label{supportlemma}
For each $\chi \in \g^\ast$ we may define $\chi'\in \g^\ast$ by
$$\chi'(x) := \left\{ \begin{array}{cc} \chi(x) & x\in [\g,\g] \\ 0 & x \in \z(\g) \end{array} \right. .$$
Then $U_\chi(\g) \cong U_{\chi'}(\g)$.
\end{Lemma}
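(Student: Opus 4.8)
The plan is to produce an explicit algebra isomorphism $U_\chi(\g) \to U_{\chi'}(\g)$ by twisting with a suitable automorphism, or more directly by reparametrising the generators of the defining ideal. Write $\g = \z(\g) \oplus [\g,\g]$, which is available since $p$ is very good for $G$. The key observation is that $\z(\g)$ is a restricted abelian Lie algebra which is central in $\g$, so for $z \in \z(\g)$ the element $z^p - z^{[p]}$ is central in $U(\g)$, and more importantly the difference $\chi - \chi'$ is a linear functional supported on $\z(\g)$, i.e.\ it factors through $\g \twoheadrightarrow \g/[\g,\g] \cong \z(\g)$.

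The main step is to exhibit the isomorphism. I would fix a basis $z_1,\dots,z_r$ of $\z(\g)$ and consider the automorphism $\sigma$ of $U(\g)$ that is the identity on $[\g,\g]$ and sends $z_i \mapsto z_i + c_i$ for suitable scalars $c_i \in \k$; since the $z_i$ are central, any such assignment extends (by the universal property, or by hand on the PBW basis) to an algebra automorphism of $U(\g)$. One then checks that $\sigma$ carries the ideal $I_\chi U(\g)$ onto $I_{\chi'} U(\g)$: on generators coming from $x \in [\g,\g]$ the generator $x^p - x^{[p]} - \chi(x)^p$ is fixed and $\chi(x) = \chi'(x)$, so nothing happens; on the generator attached to $z_i$ we compute
$$\sigma(z_i^p - z_i^{[p]} - \chi(z_i)^p) = (z_i + c_i)^p - (z_i + c_i)^{[p]} - \chi(z_i)^p.$$
Here one uses that in characteristic $p$ the abelian restricted structure gives $(z_i + c_i)^p = z_i^p + c_i^p$ in $U(\g)$ and $(z_i+c_i)^{[p]} = z_i^{[p]}$ (a scalar $c_i$ has $c_i^{[p]} = 0$, interpreting $c_i$ as lying in the centre of the abelian $p$-Lie algebra), so the expression becomes $z_i^p - z_i^{[p]} - (\chi(z_i)^p - c_i^p)$. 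Choosing $c_i$ with $c_i^p = \chi(z_i)^p$, i.e.\ $c_i = \chi(z_i)$ (taking $p$-th roots is possible and unique since $\k$ is perfect, being algebraically closed), this is exactly the generator $z_i^p - z_i^{[p]} - \chi'(z_i)^p = z_i^p - z_i^{[p]}$ of $I_{\chi'}$. Hence $\sigma$ descends to an isomorphism $U_\chi(\g) \xrightarrow{\sim} U_{\chi'}(\g)$.

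The only point requiring a little care — and the step I expect to be the mild obstacle — is the verification of $(z+c)^{[p]}$ and the commuting of $\sigma$ with the $p$-operation: one must be careful that the $p$-th power map on the abelian restricted Lie algebra $\z(\g)$ is $p$-semilinear and that translating by a central scalar interacts correctly with it, and that $\z(\g)$ is indeed $p$-stable (which follows from the decomposition $\g = \z(\g) \oplus [\g,\g]$ being a decomposition of restricted Lie algebras, as recorded in the preliminaries). Once the compatibility on generators is checked, the rest is formal: an algebra automorphism sending one defining ideal to the other induces an isomorphism of the quotients.
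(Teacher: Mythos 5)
Your overall idea — twist by a translation automorphism of $U(\g)$ supported on the central directions — is a genuinely different route from the paper's, and it can be made to work, but the key computation as written contains a real error.

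The problem is in the step where you assert $\sigma(z_i^p - z_i^{[p]} - \chi(z_i)^p) = (z_i+c_i)^p - (z_i+c_i)^{[p]} - \chi(z_i)^p$ and then claim $(z_i+c_i)^{[p]} = z_i^{[p]}$. The element $z_i + c_i$ does not lie in $\g$ (it is $z_i$ plus a scalar multiple of $1 \in U(\g)$), so the expression $(z_i+c_i)^{[p]}$ is not defined: the $p$-operation lives on $\g$, not on $U(\g)$. What you must compute is $\sigma(z_i^{[p]})$, and $z_i^{[p]}$ is simply whatever element of $\z(\g)$ it happens to be. In the present setting the $p$-map on $\z(\g)$ is non-degenerate (it is the Lie algebra of the torus $Z(G)^\circ$), so you may choose a toral basis with $z_i^{[p]} = z_i$; then $\sigma(z_i^{[p]}) = \sigma(z_i) = z_i + c_i$, and the correct computation reads
\[
\sigma\bigl(z_i^p - z_i^{[p]} - \chi(z_i)^p\bigr) = z_i^p + c_i^p - z_i - c_i - \chi(z_i)^p = z_i^p - z_i - \bigl(\chi(z_i)^p - c_i^p + c_i\bigr).
\]
For this to equal the generator $z_i^p - z_i$ of $I_{\chi'}$ you need $c_i^p - c_i = \chi(z_i)^p$, an Artin--Schreier equation. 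Your choice $c_i = \chi(z_i)$ does \emph{not} solve it unless $\chi(z_i) = 0$. The equation is of course solvable since $\k$ is algebraically closed, so the strategy can be repaired, but as written the conclusion does not follow.

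For comparison, the paper's own proof avoids constructing any automorphism: it uses the $p$-stable decomposition $\g = \z(\g) \oplus [\g,\g]$ to split $U_\chi(\g) \cong U_\chi(\z(\g)) \otimes U_\chi([\g,\g])$, and then shows directly that $U_\chi(\z(\g))$ and $U_0(\z(\g))$ are both finite products of copies of $\k$ (of the same size $p^d$), using separability of $X^p - X - \lambda$ — which is exactly the same Artin--Schreier fact in disguise. Your approach, once the choice of $c_i$ is fixed, has the advantage of producing an explicit isomorphism; the paper's is slightly shorter and sidesteps the need to verify that translation gives a well-defined algebra automorphism.
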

\begin{proof}
Our assumption (2) and (3) imply that $\z(\g) = \Lie(Z(G))$ and $[\g,\g] = \Lie[G,G]$ (combine \cite[2.5.12]{Le} and \cite[2.1]{PS}),
hence the $p$-mapping stabalises both $\z(\g)$ and $[\g,\g]$, and it follows that
$U_\chi(\g) \cong  U_\chi(\z(\g))\bigotimes_\k U_\chi([\g,\g])$.
Now it suffices to prove that $U_\chi(\z(\g))\cong U^{[p]}(\z(\g))$. As the $p$-mapping is non-degenerate on $\z(\g)$ there is a basis
$z_1,...,z_d$ fixed by $x \mapsto x^{[p]}$ (see \cite[Theorem~3.6(1)]{FS}).
This gives $U_\chi(\z(\g)) \cong \k[z_1,...,z_d]/(z_i^p - z_i - \chi(z_i)^p : i=1,...,d)$, and similar for $U^{[p]}(\z(\g))$.
Since the polynomial $X^p - X - \lambda$ is separable for every $\lambda \in \k$
in the polynomial ring $\k[X]$ in one variable, it follows that both $U_\chi(\z(\g))$ and $U^{[p]}(\z(\g))$
are finite dimensional commutative semisimple algebras, hence isomorphic to the product of $p^d$ copies of $\k$.
\end{proof}

We shall need a description of the centre $Z(\g)$ which has been developed by many authors over the years. The first contribution was made
by Veldkamp, and successive authors have improved the restrictions on both $G$ and $p$. We direct the reader to the list of references given
in \cite[\S 9]{Ja1}. The version of the theorem stated here appeared in \cite[Theorem 2]{MR}.
\begin{Theorem}
The centre $Z(\g)$ is generated by $Z_p(\g)$ and $U(\g)^G$ as a $\k$-algebra. Furthermore, $Z(\g)$ is a free $Z_p(\g)$-module of rank $p^\ell$.
\end{Theorem}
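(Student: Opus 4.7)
The plan is to work with the PBW filtration $F_\bullet U(\g)$, whose associated graded is $S(\g)$. The containments $Z_p(\g) \subseteq Z(\g)$ and $U(\g)^G \subseteq Z(\g)$ are standard: the first because $\ad(x)^p = \ad(x^{[p]})$ in characteristic $p$, and the second by construction. The substance is then the reverse inclusion together with the rank computation.

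First I would collect the graded input. Under hypotheses (1)--(3) the Poisson centre of $S(\g)$ coincides with $S(\g)^G$, and Chevalley restriction in very good characteristic identifies $S(\g)^G \cong S(\h)^W$ as a polynomial algebra on $\ell$ homogeneous generators $f_1, \ldots, f_\ell$. Using the Harish-Chandra isomorphism in positive characteristic (available thanks to the hypotheses) I would lift each $f_i$ to a $G$-invariant element $F_i \in U(\g)^G$ whose PBW symbol is $f_i$; this also exhibits $U(\g)^G$ as the polynomial ring $\k[F_1, \ldots, F_\ell]$.

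For the generation statement I would proceed by induction on filtration degree. Given $z \in Z(\g)$ with $z \in F_n$, the symbol $\sigma(z) \in S(\g)_n$ is Poisson-central and so lies in $S(\g)^G$. Because $\sigma(x^p - x^{[p]}) = x^p$, the image of $\k[Z_p(\g), U(\g)^G]$ under $\sigma$ contains both $\k[f_1, \ldots, f_\ell]$ and the subalgebra $\k[x^p : x \in \g]$, and together these generate $S(\g)^G$. I can therefore write $\sigma(z)$ as a polynomial in these generators, lift that polynomial to a central element of $\k[Z_p(\g), U(\g)^G]$ with the same symbol, and subtract to drop the filtration degree by one.

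For the rank, the $p$-map identifies $Z_p(\g) \cong S(\g^{(1)})$ as a polynomial ring on $\dim \g$ variables, $Z(\g)$ is finite over it (Veldkamp), and the generation argument refines to $Z(\g) \cong Z_p(\g) \otimes_{Z_p(\g)^G} U(\g)^G$. Since $Z_p(\g)^G \cong S(\g^{(1)})^G$ is polynomial on $\ell$ generators and $U(\g)^G$ is a free module of rank $p^\ell$ over it, base change along $Z_p(\g)^G \hookrightarrow Z_p(\g)$ gives freeness of rank $p^\ell$. The principal obstacle lies in the graded input of the second paragraph: the positive-characteristic Chevalley restriction theorem, the identification of the Poisson centre of $S(\g)$ with $S(\g)^G$, and the Harish-Chandra lift each genuinely use the hypothesis that $p$ is very good and that the derived subgroup is simply connected.
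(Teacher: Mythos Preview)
The paper does not prove this theorem: it is quoted from the literature, with the stated version attributed to \cite[Theorem~2]{MR}, building on Veldkamp and the references listed in \cite[\S 9]{Ja1}. So there is no proof in the paper to compare against; your sketch is broadly the standard filtration argument underlying those sources.

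That said, there is a genuine error in your graded input. You assert that under hypotheses (1)--(3) the Poisson centre of $S(\g)$ coincides with $S(\g)^G$. This is false in positive characteristic: for every $x \in \g$ the element $x^p \in S(\g)$ is Poisson-central, since $\{x^p, y\} = p\,x^{p-1}\{x,y\} = 0$, yet $x^p$ is almost never $G$-invariant. The Poisson centre is $S(\g)^\g$, which strictly contains $S(\g)^G$ whenever $\g$ is nonabelian. You seem half aware of this, since in the next paragraph you invoke both $\k[f_1,\ldots,f_\ell]$ and $\k[x^p : x \in \g]$ and write ``together these generate $S(\g)^G$'' --- but that phrase should read ``together these generate $S(\g)^\g$'', and this corrected statement is precisely the nontrivial graded content of Veldkamp's theorem. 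You have asserted it rather than proved it, and it is exactly here (not merely in Chevalley restriction or in lifting invariants to $U(\g)^G$) that the hypotheses on $p$ and $G$ do real work. With that gap acknowledged and filled, your induction on filtration degree and the tensor-product rank computation are along the right lines.
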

\begin{Corollary}\label{firstcor}
$\dim \varphi_\chi Z(\g) = p^\ell$. $\hfill \qed$
\end{Corollary}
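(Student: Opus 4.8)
The plan is to combine the preceding Theorem with the definition of $\varphi_\chi$ and a dimension count coming from the reduction modulo $I_\chi$. First I would observe that $\varphi_\chi$ is by definition the restriction of the projection $\pi_\chi : U(\g) \twoheadrightarrow U_\chi(\g)$ to the subalgebra $Z(\g)$, so that $\varphi_\chi(Z(\g))$ is the image of $Z(\g)$ in $U_\chi(\g)$. Since $U_\chi(\g) = U(\g)/I_\chi U(\g)$, this image is $Z(\g)/(Z(\g) \cap I_\chi U(\g))$, so the task is to compute the codimension of $Z(\g)\cap I_\chi U(\g)$ inside $Z(\g)$.

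The key step uses the structure of $Z(\g)$ as a free $Z_p(\g)$-module of rank $p^\ell$ furnished by the Theorem. Pick a basis $f_1,\dots,f_{p^\ell}$ of $Z(\g)$ over $Z_p(\g)$. The $p$-centre $Z_p(\g)$ is a polynomial ring $\k[x_1^p - x_1^{[p]},\dots,x_n^p - x_n^{[p]}]$ (where $x_1,\dots,x_n$ is a basis of $\g$), and the ideal $I_\chi$ is the maximal ideal of $Z_p(\g)$ consisting of those elements $x_i^p - x_i^{[p]} - \chi(x_i)^p$; thus $Z_p(\g)/I_\chi \cong \k$. Because $Z(\g)$ is $Z_p(\g)$-free on the $f_i$, tensoring the identity $Z(\g) = \bigoplus_i Z_p(\g) f_i$ with $Z_p(\g)/I_\chi$ over $Z_p(\g)$ gives $Z(\g)/I_\chi Z(\g) \cong \bigoplus_i (Z_p(\g)/I_\chi) \bar f_i$, which is a $\k$-vector space of dimension exactly $p^\ell$. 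It therefore remains to identify $Z(\g) \cap I_\chi U(\g)$ with $I_\chi Z(\g)$.

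The inclusion $I_\chi Z(\g) \subseteq Z(\g) \cap I_\chi U(\g)$ is clear since $I_\chi \subseteq Z(\g)$. For the reverse inclusion — which I expect to be the main obstacle — I would argue that $U(\g)$ is free (hence faithfully flat) as a module over $Z_p(\g)$: this is a standard consequence of the PBW theorem, as $U(\g)$ has a basis of ordered monomials in the $x_i$ with each exponent taken modulo $p$, times the monomials in the $x_i^p - x_i^{[p]}$. Flatness of $U(\g)/Z(\g)$ over $Z_p(\g)$, together with the fact that $Z(\g)$ is $Z_p(\g)$-free, lets one conclude that $Z(\g) \cap I_\chi U(\g) = I_\chi U(\g) \cap Z(\g) = I_\chi Z(\g)$; concretely, an element of $Z(\g)$ lying in $I_\chi U(\g)$ has PBW coordinates divisible by the generators of $I_\chi$, and by the freeness of $Z(\g)$ these coordinates already witness membership in $I_\chi Z(\g)$. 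Assembling these steps gives $\varphi_\chi(Z(\g)) \cong Z(\g)/I_\chi Z(\g)$, of dimension $p^\ell$, which is the claim. $\hfill\qed$
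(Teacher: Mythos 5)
The paper gives no proof at all for this corollary (it is marked $\qed$ after the statement of Theorem~2.2), so it is read as an immediate consequence of the freeness of $Z(\g)$ over $Z_p(\g)$. Your proposal tries to spell that consequence out, and in doing so you correctly isolate the real content: the reduction is to showing $Z(\g)\cap I_\chi U(\g)=I_\chi Z(\g)$, after which the count $\dim Z(\g)/I_\chi Z(\g)=p^\ell$ is immediate from freeness. The upper bound $\dim\varphi_\chi(Z(\g))\le p^\ell$ is indeed automatic from that freeness. It is the reverse inclusion that is the whole issue, and here your argument has a genuine gap.

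You write that $U(\g)$ is free over $Z_p(\g)$ and then invoke ``flatness of $U(\g)/Z(\g)$ over $Z_p(\g)$'' as if this follows. It does not: a free submodule of a free module over a polynomial ring need not have flat quotient, and in general one would then have $Z\cap I_\chi U\supsetneq I_\chi Z$. A small example showing the phenomenon: take $R=\k[x]$, $U=R[y]/(y^2)$, and the subring $Z=R\cdot 1\oplus R\cdot xy$. Both $Z$ and $U$ are $R$-free, $1\in Z$, yet with $I=(x)$ one has $Z\cap IU = xR\oplus Rxy$ while $IZ=xR\oplus xRxy$. Your ``concrete'' justification has the same problem: membership of $z\in Z(\g)$ in $I_\chi U(\g)$ gives that the coordinates of $z$ in a PBW $Z_p(\g)$-basis of $U(\g)$ lie in $I_\chi$, but those are coordinates with respect to a basis of $U(\g)$, not of $Z(\g)$; the change of basis is not diagonal, so nothing is ``witnessed'' about the coefficients of $z$ in the chosen $Z_p(\g)$-basis $f_1,\dots,f_{p^\ell}$ of $Z(\g)$.

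What is actually needed is that $Z(\g)$ is a $Z_p(\g)$-direct summand of $U(\g)$ (equivalently that $U(\g)/Z(\g)$ is $Z_p(\g)$-flat), and this does not follow from the partial statement of the theorem reproduced in the paper. It \emph{is} part of the fuller content of the Veldkamp/Friedlander--Parshall/Mirkovi\'c--Rumynin picture -- for instance the statement that $U(\g)$ is projective (indeed free) as a module over $Z(\g)$, or equivalently that $\Spec Z(\g)\to\g^{*(1)}$ is flat with all fibres of dimension $p^\ell$ -- and citing that would close the gap. Alternatively one can argue via symbols: the principal symbols of the Harish-Chandra generators are the basic $G$-invariants of $S(\g)$, the monomials in them with exponents less than $p$ remain linearly independent in $S(\g)/I_\chi S(\g)$, and lifting back shows the images of $f_1,\dots,f_{p^\ell}$ in $U_\chi(\g)$ are independent. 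Without one of these extra inputs, the crucial inclusion $Z(\g)\cap I_\chi U(\g)\subseteq I_\chi Z(\g)$ is not established by your proposal.
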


Since we are in very good characteristic there exists a rational representation of $G$ such that the associated trace form on $\g$
is non-degenerate \cite[2.5.12]{Le}. This induces an isomorphism $\kappa : \g \rightarrow \g^*$ of $G$-modules, which we keep fixed
throughout this paper. The isomorphism allows us to transfer the Jordan decomposition theorem from $\g$ to $\g^*$. For $\chi \in \g^*$ we shall write
$\chi = \chi_n + \chi_s$ for the decomposition into nilpotent and semisimple parts. The next theorem was originally due to
Kac--Weisfeiler \cite{KW}, but was later augmented to the form presented here by Friedlander--Parshall \cite[Theorem 3.2]{FP} and by Premet
\cite[Proposition~2.5]{PrST}.
\begin{Theorem}\label{KWtheorem}
Suppose that $\chi_s + \chi_n$ is the Jordan decomposition of $\chi\in \g^\ast$. Write $d = \frac{1}{2}(\dim \g - \dim \g_{\chi_s})$.
Then as algebras we have
$$U_\chi(\g) \cong \Mat_{p^d} U_\chi(\g_{\chi_s}).$$
\end{Theorem}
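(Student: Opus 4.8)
The plan is to recast the statement as a Morita equivalence: I will realise $U_\chi(\g)$ as the endomorphism ring of a free module of rank $p^d$ over $U_\chi(\l)$, where $\l:=\g_{\chi_s}$; this is the ring-theoretic shadow of the module category equivalence of Kac and Weisfeiler. \textbf{Reduction to a parabolic.} Write $\kappa^{-1}(\chi)=t+n$ for the Jordan decomposition in $\g$, so $\chi_s=\kappa(t)$, $\chi_n=\kappa(n)$, with $t$ semisimple, $n$ nilpotent and $[t,n]=0$. Since $\kappa$ is $G$-equivariant and the invariant form is non-degenerate one has $\g_{\chi_s}=\g_t$, so $\l:=\g_t$ is the Lie algebra of a Levi subgroup. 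Choose a parabolic subalgebra $\p=\l\oplus\n$ with Levi factor $\l$ and nilradical $\n$, and let $\p^-=\l\oplus\n^-$ be an opposite parabolic, so $\g=\n^-\oplus\l\oplus\n$ and $\dim\n=\dim\n^-=\tfrac12(\dim\g-\dim\l)=d$. From $[t,n]=0$ we get $n\in\g_t=\l$, hence $\kappa^{-1}(\chi)=t+n\in\l$; as $\l$ lies in the orthogonal complement of $\n\oplus\n^-$ for the invariant form, it follows that $\chi(\n)=\chi(\n^-)=0$. Note $\n,\n^-,\p,\p^-$ are restricted subalgebras.

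\textbf{The bimodule and reduction to faithfulness.} Since $\chi|_\n=0$, the projection $\p\twoheadrightarrow\l$ induces an algebra surjection $U_\chi(\p)\twoheadrightarrow U_\chi(\l)$, making $U_\chi(\l)$ a $(U_\chi(\p),U_\chi(\l))$-bimodule. Set $P:=U_\chi(\g)\otimes_{U_\chi(\p)}U_\chi(\l)$, a $(U_\chi(\g),U_\chi(\l))$-bimodule. By the PBW theorem $U_\chi(\g)$ is free of rank $p^d$ as a right $U_\chi(\p)$-module, a basis being the PBW monomials in a basis of $\n^-$, so $P$ is free of rank $p^d$ as a right $U_\chi(\l)$-module; therefore $\End_{U_\chi(\l)}(P)\cong\Mat_{p^d}U_\chi(\l)$. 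The left action of $U_\chi(\g)$ on $P$ provides a $\k$-algebra map $\lambda\colon U_\chi(\g)\to\End_{U_\chi(\l)}(P)$, and both sides have dimension $p^{\dim\g}$ over $\k$ because $2d+\dim\l=\dim\g$. Thus it suffices to prove $\lambda$ is injective, i.e.\ that $P$ is a faithful $U_\chi(\g)$-module; then $\lambda$ is an isomorphism, which is the assertion of the theorem.

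\textbf{Faithfulness via the Kac--Weisfeiler equivalence.} I would obtain faithfulness from the fact that $P=F(U_\chi(\l))$, where $F:=U_\chi(\g)\otimes_{U_\chi(\p)}(-)=P\otimes_{U_\chi(\l)}(-)\colon U_\chi(\l)\lmod\to U_\chi(\g)\lmod$ is inflation along $\p\to\l$ followed by induction, and $F$ is an equivalence of categories with quasi-inverse $E:=(-)^\n\colon M\mapsto\{m:\n m=0\}$: a progenerator is faithful, and $P$ is the image under $F$ of the progenerator $U_\chi(\l)$. The adjunction $F\dashv E$ is tensor--hom combined with the identity $\Hom_{U_\chi(\p)}(V,M)=\Hom_{U_\chi(\l)}(V,M^\n)$, valid because $\n$ acts by zero on an inflated module, and $F$ is exact by the PBW freeness just used. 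The elementary input on the other side is that $U_\chi(\n)=u(\n)$ is a local (Frobenius) algebra, since $\n$ is a $p$-nilpotent restricted Lie algebra: hence every nonzero $U_\chi(\g)$-module has nonzero $\n$-invariants (its $u(\n)$-socle), so $E$ kills no nonzero module, the unit $V\to(FV)^\n$ is readily checked to be an isomorphism, and the counit $F(M^\n)\to M$ is onto when $M$ is simple.

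\textbf{The main obstacle.} The crux is to promote this to: the counit $F(M^\n)\to M$ is an isomorphism for \emph{every} finite-dimensional $M$. Embedding $M|_{u(\n)}$ into the injective hull of its socle gives $\dim M^\n\ge p^{-d}\dim M$, with equality exactly when $M$ is free as a $u(\n)$-module; and unwinding the definitions shows that the counit is an isomorphism precisely when $M$ is free over $u(\n)$ and $U_\chi(\n^-)M^\n=M$. So the entire content collapses to showing that \emph{every $U_\chi(\g)$-module is free over $u(\n)$} (the dimension count then forces the rest). This is the step I expect to fight hardest for: it must genuinely exploit the position of $\n$ inside $\g$ --- e.g.\ the opposite nilradical $\n^-$ with $[\n,\n^-]\subseteq\l$ furnishing a ``raising operator'', or equivalently the vanishing of the relevant restricted Lie algebra cohomology $H^1(\n,M)$ for $U_\chi(\g)$-modules $M$ --- and does not follow from formal properties of local Frobenius algebras alone. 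Everything past this point is bookkeeping: once $F$ and $E$ are mutually inverse, $P$ is a progenerator, hence faithful, and the dimension count of Step 2 concludes that $U_\chi(\g)\cong\Mat_{p^d}U_\chi(\l)$.
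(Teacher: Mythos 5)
The paper does not prove this statement: it is quoted as a known result and attributed to Kac--Weisfeiler, Friedlander--Parshall \cite[Theorem~3.2]{FP}, and Premet \cite[Proposition~2.5]{PrST}, so there is no in-text argument to compare against. Judged on its own terms, your architecture is the right one and matches the one used in those references: pass to a parabolic $\p=\l\oplus\n$ with Levi $\l=\g_{\chi_s}$, observe $\chi$ kills $\n$ and $\n^-$, form the $(U_\chi(\g),U_\chi(\l))$-bimodule $P=U_\chi(\g)\otimes_{U_\chi(\p)}U_\chi(\l)$, use PBW to compute $\End_{U_\chi(\l)}(P)\cong\Mat_{p^d}U_\chi(\l)$, and then reduce to the faithfulness of $P$ by a dimension count. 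Your reduction of faithfulness to the statement ``every finite-dimensional $U_\chi(\g)$-module is free over $u(\n)$'' is also correct, including the bookkeeping via the socle bound for the local Frobenius algebra $u(\n)$.

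However, you explicitly stop at that freeness statement, flagging it as ``the step I expect to fight hardest for,'' so the proposal contains a genuine, self-acknowledged gap. Worse, the one structural fact that actually makes the freeness provable is missing from your discussion: the alternating form $B(x,y)=\chi([x,y])$ on $\g$ has radical exactly $\g_{\chi_s}=\l$, and since $\n,\n^-$ are isotropic (because $\chi$ vanishes on them and each is a subalgebra), $B$ restricts to a \emph{non-degenerate} pairing $\n\times\n^-\to\k$. This is the heart of the Kac--Weisfeiler mechanism: it turns $\n^-$ into a dual of $\n$ acting on any $U_\chi(\g)$-module in a Heisenberg-like fashion, and the freeness of $M|_{u(\n)}$ is then proved by filtering $\n$ and using this duality (or, equivalently, by the vanishing of the relevant restricted cohomology that you allude to). Replacing your placeholder ``must genuinely exploit the position of $\n$ inside $\g$'' with this non-degeneracy and carrying out the filtration argument is exactly what Friedlander--Parshall and Premet do; without it the proof does not close. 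So: right skeleton, but the load-bearing lemma is identified and then left unproved, and the key input (non-degeneracy of $\chi([\,\cdot\,,\,\cdot\,])$ on $\n\times\n^-$) is not even named.
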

\begin{Corollary}\label{dimscorol}
Let $\chi \in \g^*$ and suppose that $p$ is very good for the Levi subalgebra $\g_{\chi_s}$.
Then $\dim Z_\chi(\g) = \dim Z_{\chi_n}(\g_{\chi_s})$.
\end{Corollary}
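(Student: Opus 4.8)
The plan is to deduce Corollary~\ref{dimscorol} directly from Theorem~\ref{KWtheorem} together with Lemma~\ref{supportlemma}, by tracking how the centre behaves under the two algebra isomorphisms involved. First I would apply Theorem~\ref{KWtheorem} to obtain $U_\chi(\g) \cong \Mat_{p^d} U_\chi(\g_{\chi_s})$, and recall the elementary fact that the centre of a matrix algebra $\Mat_n A$ over a ring $A$ is canonically $Z(\Mat_n A) = \{a \cdot \Id_n : a \in Z(A)\} \cong Z(A)$; hence $Z_\chi(\g) \cong Z(U_\chi(\g_{\chi_s}))$, so in particular $\dim Z_\chi(\g) = \dim Z(U_\chi(\g_{\chi_s}))$. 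It remains to replace $U_\chi(\g_{\chi_s})$ by $U_{\chi_n}(\g_{\chi_s})$ on the level of centres, i.e. to show $\dim Z(U_\chi(\g_{\chi_s})) = \dim Z(U_{\chi_n}(\g_{\chi_s}))$.

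For the second step I would work inside the Levi subalgebra $\l := \g_{\chi_s}$, which is restricted and, by hypothesis, has $p$ very good for $L$. Here the point is that $\chi|_\l$ and $\chi_n|_\l$ differ by the linear functional $\chi_s|_\l$, and I want to argue that this difference is supported on the centre $\z(\l)$. Indeed $\chi_s = \kappa(y)$ for a semisimple element $y \in \g$, and $\l = \g_y = \g_{\chi_s}$ is precisely the centraliser, so $y$ is central in $\l$, i.e. $y \in \z(\l)$; since $\kappa$ is an isomorphism of $G$-modules it restricts to an isomorphism identifying $\z(\l)$ with its annihilator's complement, and one checks that $\chi_s|_\l$ vanishes on $[\l,\l]$ and is nonzero only on $\z(\l)$. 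Thus $\chi|_\l$ and $\chi_n|_\l$ agree on $[\l,\l]$ and differ only by a functional supported on $\z(\l)$. Applying Lemma~\ref{supportlemma} to the reductive Lie algebra $\l$ (valid since assumptions (1)--(3) hold for $L$ with $p$ very good, and $[L,L]$ simply connected because $[G,G]$ is), both $U_\chi(\l)$ and $U_{\chi_n}(\l)$ are isomorphic to the common algebra $U_{(\chi|_\l)'}(\l) = U_{(\chi_n|_\l)'}(\l)$, since $(\chi|_\l)' = (\chi_n|_\l)'$ by the above. Hence $U_\chi(\g_{\chi_s}) \cong U_{\chi_n}(\g_{\chi_s})$ as algebras, and in particular their centres have the same dimension. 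Combining with the first step gives $\dim Z_\chi(\g) = \dim Z_{\chi_n}(\g_{\chi_s})$, as claimed.

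The main obstacle is the bookkeeping in the second step: one must be careful that Lemma~\ref{supportlemma} genuinely applies to $\l = \g_{\chi_s}$, which requires verifying assumptions (1)--(3) for the Levi $L$ and — crucially — that $p$ is very good for $L$, which is exactly the extra hypothesis imposed in the corollary (and which, as the paper warns, can fail for a general Levi). One must also confirm that $\chi_s$ really does restrict to a functional on $\l$ supported on the centre; this is where the choice of the $G$-equivariant trace form $\kappa$ and the identification $\l = \g_{\kappa^{-1}(\chi_s)}$ does the work, together with the very-good-characteristic decomposition $\l = \z(\l) \oplus [\l,\l]$ recalled in Section~\ref{notations}. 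Once these compatibilities are in place the argument is purely formal.
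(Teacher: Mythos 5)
Your proof is correct and follows essentially the same route as the paper: apply the Kac--Weisfeiler equivalence (Theorem~\ref{KWtheorem}) to reduce $\dim Z_\chi(\g)$ to $\dim Z_\chi(\g_{\chi_s})$, then invoke Lemma~\ref{supportlemma} on the Levi $\l = \g_{\chi_s}$ (using the very-good hypothesis to get $\l = \z(\l)\oplus[\l,\l]$) to pass from $\chi|_\l$ to $\chi_n|_\l$. The only difference is that you explicitly verify the point the paper leaves implicit --- namely that $\chi_s|_\l$ vanishes on $[\l,\l]$ because $\kappa^{-1}(\chi_s)$ is central in $\l$ and $\kappa$ comes from a $G$-invariant form --- which is exactly the check needed to conclude $(\chi|_\l)' = (\chi_n|_\l)'$ before applying the lemma.
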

\begin{proof}
The previous theorem tells us that $\dim Z_\chi(\g) = \dim Z_{\chi}(\g_{\chi_s})$. Since the characteristic
of the field is very good for $\g_{\chi_s}$ we have $\g_{\chi_s} = \z(\g_{\chi_s}) \bigoplus [\g_{\chi_s}, \g_{\chi_s}]$ and
so we may apply Lemma~\ref{supportlemma}.
\end{proof}

\section{The cases $\chi = 0$ and $\chi$ regular}\label{extremal}

As we mentioned previously the case $\chi = 0$ of the current theorem is an example originally
devised by Premet and presented by Brown and Gordon in \cite[\S 3.17]{BG}.
\begin{Theorem}\label{zerocase}
The map $\varphi_0$ is not surjective.
\end{Theorem}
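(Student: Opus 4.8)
The plan is to compute both sides explicitly for $\chi = 0$ and exhibit a central element of $U_0(\g)$ not in the image of $\varphi_0$. By Corollary~\ref{firstcor} we have $\dim \varphi_0(Z(\g)) = p^\ell$, so it suffices to show $\dim Z_0(\g) > p^\ell$. The cleanest way is to reduce to a small rank-one situation: since the statement only concerns a lower bound on $\dim Z_0(\g)$ and since $U_0(\g) \cong U_0(\z(\g)) \otimes_\k U_0([\g,\g])$ with the first factor contributing exactly $p^{\dim\z(\g)}$ to the dimension of the centre, it is enough to treat $G = SL_2$ (or more generally find one distinguished copy of $\sl_2$), where $\ell$ drops by the rank of the centre. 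Actually the slickest route: it suffices to find a single simple algebraic group, e.g. $SL_2$, for which $\dim Z_0(\g) > p$, and then deduce the general case by noting that for any $G$ as in our hypotheses, $[\g,\g]$ contains a restricted $\sl_2$-subalgebra (the $\sl_2$-triple attached to a root) and building central elements from it, or — more honestly matching the reference — by citing that the argument of \cite[\S 3.17]{BG} applies verbatim.

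In more detail, the key computation is for $\g = \sl_2$. Here $U_0(\sl_2)$ has dimension $p^3$, and its simple modules are the restrictions of the simple $SL_2$-modules $L(0), L(1), \dots, L(p-1)$ of dimensions $1, 2, \dots, p$. By linkage/block theory of $U_0(\sl_2)$ (Humphreys), the principal block containing $L(0)$ and $L(p-2)$ is not simple, and one computes the number of blocks of $U_0(\sl_2)$ to be $(p+1)/2$ — wait, the relevant count is the number of blocks, each contributing at least one to $\dim Z_0$, plus extra contributions from non-semisimple blocks. Concretely, $U_0(\sl_2)$ decomposes into blocks: $(p-1)/2$ blocks each Morita equivalent to the principal block of a small algebra with a $2$-dimensional centre, plus one block ($L(p-1)$, the Steinberg module) which is a matrix algebra. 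So $\dim Z_0(\sl_2) = 2 \cdot \frac{p-1}{2} + 1 = p$. Hmm, that gives exactly $p$, not more — so this naive count does not immediately work, and the actual obstruction must be subtler.

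The real point, following Premet's argument, is that one shows $\dim Z_0(\g) > p^\ell$ by producing a central element coming from the \emph{top} of a Verma-type module that cannot be a polynomial in the $p$-centre and the Harish-Chandra invariants. The cleanest formulation: the image $\varphi_0(Z(\g))$ consists of elements acting on the baby Verma module $Z_0(\lambda) := U_0(\b)$-induced module by a scalar depending only on the Harish-Chandra projection of $\lambda$ (hence by the \emph{dot-orbit} of $\lambda$ under $W_p$ restricted mod $p$), whereas $Z_0(\g)$ separates the blocks, and the number of blocks strictly exceeds the number of Harish-Chandra central characters realised on baby Vermas precisely because of the fixed points / degenerate linkage at $\chi = 0$. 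Thus one counts: number of central characters of $Z_0(\g)$ $\geq$ number of blocks of $U_0(\g)$, and number of blocks $> p^\ell$ / (size of generic $W_p$-orbit $\cdot$ something). I would follow \cite[\S 3.17]{BG} directly here rather than reconstruct it.

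The main obstacle I anticipate is exactly this last point: unlike the regular case where Mirkovi\'c--Rumynin give surjectivity by a dimension/freeness argument, at $\chi = 0$ one genuinely needs to understand the block decomposition of $U_0(\g)$ and compare it with the coarser decomposition induced by $\varphi_0(Z(\g))$, and the strict inequality $\dim Z_0(\g) > p^\ell$ is not formal — it uses the fact that the linkage principle for $U_0(\g)$ involves the affine Weyl group dot-action, whose orbits on the relevant finite set are strictly finer than the orbits governing the Harish-Chandra centre. Since this case is not original to the present paper, the honest and efficient proof is simply to invoke \cite[\S 3.17]{BG}, perhaps after the reduction via Lemma~\ref{supportlemma} to the case where $\g$ is semisimple so that the cited argument applies directly.
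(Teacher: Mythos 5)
Your final step—cite \cite[\S 3.17]{BG} after the routine reduction to the semisimple part of $\g$—is exactly what the paper does: Theorem~\ref{zerocase} carries no proof in the paper beyond that citation, so your proposal matches the paper's approach. One incidental correction to your exploratory $\sl_2$ computation: each of the $(p-1)/2$ non-Steinberg blocks of $U_0(\sl_2)$ has a $3$-dimensional centre (not $2$-dimensional), so $\dim Z_0(\sl_2) = 3\cdot\tfrac{p-1}{2}+1 = \tfrac{3p-1}{2} > p$; the direct count you abandoned therefore does succeed for $\sl_2$, but it does not transparently propagate to higher rank (a central element of a subalgebra $U_0(\sl_2)\subset U_0(\g)$ is not central in $U_0(\g)$), which is why falling back on Brown--Gordon's argument—uniform in $G$—is the right move.
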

\begin{Remark}
Even after the current article, the construction underlying the proof of this theorem remains the only known example of a central element of some $Z_\chi(\g)$ which does not lie
in the image of $\varphi_\chi$. Apart from this such elements remain shrouded in mystery, although a theorem of M\"{u}ller \cite{Mu}
implies that the primitive central idempotents of $U_\chi(\g)$ all lie in the image of $\varphi_\chi$
(see also \cite[2.10]{BG}).
\end{Remark}

The regular case follows from \cite[Theorem~12]{MR}.
\begin{Theorem}
Write $d = \frac{1}{2}(\dim \g - \ell)$. For regular $\chi\in \g^\ast$ we have an isomorphism $$U_\chi(\g) \cong \Mat_{p^d} Z_\chi(\g).$$
\end{Theorem}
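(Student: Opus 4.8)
The plan is to reduce the statement to a known result about regular semisimple characters together with the Kac--Weisfeiler machinery already recalled above. First I would observe that for regular $\chi$, the stabiliser $\g_\chi$ has dimension $\ell$, and I want to reduce to the semisimple part $\chi_s$. Since $\g_{\chi_s}$ is the Lie algebra of a Levi subgroup $L$ and $\chi_n$ is a nilpotent element of $\g_{\chi_s}$ that is \emph{regular} in $\g_{\chi_s}$ (regularity of $\chi$ in $\g$ forces regularity of $\chi_n$ in the centraliser $\g_{\chi_s}$, because $\dim \g_\chi = \dim (\g_{\chi_s})_{\chi_n}$), Theorem~\ref{KWtheorem} gives $U_\chi(\g) \cong \Mat_{p^d}U_\chi(\g_{\chi_s})$ with $d = \frac12(\dim\g - \dim\g_{\chi_s})$, and it remains to understand $U_{\chi}(\g_{\chi_s}) = U_{\chi_n}(\g_{\chi_s})$ for a regular nilpotent character of a (possibly smaller) reductive Lie algebra. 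So I would set things up so that the case to treat directly is that of a \emph{nilpotent} regular $\chi$; but in fact the cleanest route is not to reduce at all and instead to invoke \cite[Theorem~12]{MR} as a black box.

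The core of the argument is then just to quote \cite[Theorem~12]{MR}: for regular $\chi$ the algebra $U_\chi(\g)$ is an Azumaya algebra over its centre, in fact a matrix algebra $\Mat_{p^d}(Z_\chi(\g))$ where $2d = \dim\g - \dim\g_\chi = \dim\g - \ell$ since $\chi$ is regular. This is precisely the displayed isomorphism, so once the identification $\dim\g_\chi = \ell$ is in hand there is essentially nothing left to do beyond citing the reference and matching the exponent $d$. If one prefers a self-contained reduction, one combines Theorem~\ref{KWtheorem} with the nilpotent regular case of the Mirkovi\'{c}--Rumynin theorem and then notes that $\Mat_{p^{d_1}}\Mat_{p^{d_2}}(Z) \cong \Mat_{p^{d_1+d_2}}(Z)$ with $d_1 + d_2 = \frac12(\dim\g - \ell)$; the centre is unchanged under passing through matrix algebras, so $Z_\chi(\g) \cong Z_{\chi_n}(\g_{\chi_s})$ and the product of the two matrix sizes gives the claimed one.

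The only genuine point requiring care — and hence the ``main obstacle'', though it is a mild one — is bookkeeping with Morita/matrix reductions when $\chi_s \neq 0$: one must check that $p$ being very good for $G$ is enough to run Corollary~\ref{dimscorol} and that regularity is inherited by the nilpotent part inside the Levi, i.e. that $\chi_n$ is a regular nilpotent element of $\g_{\chi_s}$. The latter follows because $\g_\chi = (\g_{\chi_s})_{\chi_n}$ (the stabiliser of $\chi$ equals the stabiliser of $\chi_n$ inside the centraliser of $\chi_s$, using $\kappa$ to pass between $\g$ and $\g^\ast$), and $\rank \g = \rank \g_{\chi_s}$ for a Levi, so minimality of $\dim\g_\chi$ forces minimality of $\dim(\g_{\chi_s})_{\chi_n}$. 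With that in place the theorem is immediate from \cite[Theorem~12]{MR}, and indeed I expect the author simply to state it as a direct consequence of that reference with the exponent computed from $\dim\g_\chi = \ell$.
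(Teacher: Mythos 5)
Your proposal is correct and matches the paper's treatment: the theorem is presented simply as a direct restatement of \cite[Theorem~12]{MR}, with no further argument, and the exponent $d = \tfrac{1}{2}(\dim\g - \ell)$ is read off from $\dim\g_\chi = \ell$ for regular $\chi$, exactly as you say in your final paragraph. The Kac--Weisfeiler reduction you sketch is a valid alternative but is not used here; you correctly anticipate that the author just cites the reference.
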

\begin{Corollary}\label{regularcase}
When $\chi \in \g^*$ is regular the map $\varphi_\chi$ is surjective.
\end{Corollary}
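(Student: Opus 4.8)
The plan is to deduce this corollary directly from the preceding theorem (the Mirkovi\'c--Rumynin statement that $U_\chi(\g) \cong \Mat_{p^d} Z_\chi(\g)$ for regular $\chi$) together with the dimension count in Corollary~\ref{firstcor}. First I would compare dimensions. On one hand $\dim U_\chi(\g) = p^{\dim \g}$, since the reduced enveloping algebra always has dimension $p^{\dim \g}$. On the other hand the isomorphism $U_\chi(\g) \cong \Mat_{p^d}(Z_\chi(\g))$ with $d = \tfrac12(\dim\g - \ell)$ gives $\dim U_\chi(\g) = p^{2d}\dim Z_\chi(\g) = p^{\dim\g - \ell}\dim Z_\chi(\g)$. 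Equating the two expressions yields $\dim Z_\chi(\g) = p^\ell$.

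Next I would invoke Corollary~\ref{firstcor}, which tells us $\dim \varphi_\chi(Z(\g)) = p^\ell$ unconditionally. Since $\varphi_\chi(Z(\g))$ is a subspace of $Z_\chi(\g)$ of the same finite dimension $p^\ell$, we conclude $\varphi_\chi(Z(\g)) = Z_\chi(\g)$, i.e. $\varphi_\chi$ is surjective onto the centre, which is exactly the claim.

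There is essentially no obstacle here: the only point requiring a word of care is that $\varphi_\chi$ genuinely lands in $Z_\chi(\g)$ (so that the containment $\varphi_\chi(Z(\g)) \subseteq Z_\chi(\g)$ makes sense), but this is immediate since the projection $U(\g) \twoheadrightarrow U_\chi(\g)$ is an algebra homomorphism and thus carries central elements to central elements. One should also note that in the rank~$\ell=0$ degenerate situation (a torus) the statement is vacuous or trivial, but the dimension argument above handles it uniformly. So the corollary is a one-line consequence of the matrix decomposition plus the universal value $p^\ell$ for $\dim\varphi_\chi(Z(\g))$.
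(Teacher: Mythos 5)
Your argument is exactly the one the paper gives: compute $\dim Z_\chi(\g) = p^\ell$ from the matrix isomorphism and the equality $\dim U_\chi(\g) = p^{\dim\g}$, then compare with Corollary~\ref{firstcor}. The paper simply compresses the dimension computation into a single sentence; the content is identical.
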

\begin{proof}
The equality $\dim U_\chi(\g) = p^{\dim(\g)}$ implies that $\dim Z_\chi(\g) = p^\ell$.
Now the conclusion follows from Corollary~\ref{firstcor}
\end{proof}
\begin{Remark}
The above statement was generalised by Theorem~2.3 and~Proposition~2.6 of \cite{PrST}, which states that
for any $\chi \in \g^\ast$ there exists a projective $U_\chi(\g)$-module $Q_\chi^{[p]}$
such that $U_\chi(\g)$ is isomorphic to a direct sum of $p^d$ copies of $Q_\chi^{[p]}$ as a left $\g$-module, and is isomorphic to
$\Mat_{p^d} \End_\g(Q_\chi^{[p]})^\opp$ as algebras, where $d := \frac{1}{2}(\dim \g - \dim \g_\chi)$.
In the case of regular $\chi$ Corollary~\ref{firstcor} now implies that $\varphi_\chi(Z(\g)) \cong \End_\g(Q_\chi^{[p]})^\opp$.
This  is a slight generalisation of the well known fact that the restricted finite $W$-algebra associated to regular nilpotent elements is isomorphic
to the centre of the reduced enveloping algebra, which is a modular analogue of the theorem, due to Kostant, stating that the finite $W$-algebra
associated to a regular element over $\C$ is isomorphic to the centre of the universal enveloping algebra (see \cite{Wa} and the references therein).
\end{Remark}

\section{Decomposition classes}\label{deformation}

In this section we shall need to recall many general results, and so unless otherwise stated we assume that $G$ is simple, simply connected,
that the characteristic is good for $G$ (possibly zero) and that $\g$ has a non-degenerate trace form.
The decomposition of $\g$ into adjoint orbits can be better understood by grouping certain orbits together as follows.
Write $x = x_n + x_s$ for the Jordan decomposition of $x\in \g$ into nilpotent and semisimple parts (see \cite[Theorem~3.5]{FS}).
We define an equivalence relation on $\g$ by declaring that $x\sim y$ whenever there exists $g\in G$ such that $\Ad(g) x_n = y_n$ and $\Ad(g) \g_{x_s} = \g_{y_s}$.
The equivalence classes under this relation are known as decomposition classes.
Equivalently, we can describe them as the sets $\Ad(G)(x_n + \z(\g_{x_s})^\reg)$ where the superscript $\reg$ is used to denote the set of elements
therein which have a centraliser in $\g$ of minimal dimension. The latter description immediately leads to a classification of the classes by
combinatorial data. Let $\Ad(G) (\g_0, e_0)$ denote the $G$-orbit of the pair $(\g_0, e_0)$ with $G$-action diagonally,
where $\g_0 \subseteq \g$ is a Levi subalgebra and $e_0 \subseteq \g_0$ is a nilpotent element. To $\Ad(G)(\g_0, e_0)$ we can associate the
class $$\D(\g_0, e_0) := \Ad(G)(e_0 + \z(\g_0)^\reg).$$ The use of decomposition classes was
pioneered by Walter Borho \cite{Bo} as the key tool in the classification of the sheets of $\g$ over $\C$.
We recommend \cite{PS} as a reference over fields of positive characteristic.

We now recall some highlights of the theory which will be needed in the sequel. Let $\N(\g)$ denote the nilpotent cone of $\g$, ie. the set of all nilpotent elements.
By \cite[Proposition~2.5]{PS} and \cite[\S 7]{Ja2}, for each class $\D$ the intersection $\overline{\D} \bigcap \N(\g)$ is irreducible. Then by the finiteness
of the number of nilpotent orbits in $\g$ there is a unique dense orbit in the intersection, which we denote by $\O_\D$.

Recall the theory of induced nilpotent orbits due to Lusztig and Spaltenstein. If $\g_0 \subseteq \g$ is a Levi factor of some parabolic
$\p = \g_0 \bigoplus \n$ and $\O_0 \subseteq \g_0$ is a nilpotent orbit then $\Ad(G)(\O_0 + \n) \subseteq \N(\g)$ is irreducible, hence
also contains a dense nilpotent orbit which we denote $\Ind_{\g_0}^\g(\O_0)$. It is known that this orbit only depends upon $\g_0$, not on $\p$,
and so the notation is justified \cite[Remark~2.7]{PS}.
\begin{Lemma}\label{inductionandclasses} Let $e_0 \in \O_0 \subseteq \g_0$ be an element of a nilpotent orbit in a Levi subalgebra. The following hold:
\begin{enumerate}
\smallskip
\item{$\Ind_{\g_0}^{\g}(\O_0) = \O_{\D(\g_0, e_0)}$.}
\medskip
\item{$\codim_{\N(\g)} \Ind_{\g_0}^\g(\O_0) = \codim_{\N(\g_0)} \O_0.$}
\smallskip
\end{enumerate}
\end{Lemma}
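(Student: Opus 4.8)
The plan is to derive both parts from the well-developed theory of decomposition classes and induced orbits, exactly as packaged in \cite{PS} and \cite{Ja2}, rather than re-proving the structural facts about sheets. For part~(1), I would unwind the definition $\D(\g_0, e_0) = \Ad(G)(e_0 + \z(\g_0)^\reg)$ and observe that its nilpotent locus is governed by the closure of $e_0 + \z(\g_0)$: the key point is that $\overline{\D(\g_0,e_0)} \cap \N(\g)$ coincides with $\overline{\Ad(G)(e_0 + \n)}$ for $\p = \g_0 \oplus \n$ any parabolic with Levi $\g_0$. Granting that, the dense orbit $\O_{\D(\g_0,e_0)}$ in the left-hand side is by definition the dense orbit in $\overline{\Ad(G)(\O_0 + \n)}$, which is $\Ind_{\g_0}^\g(\O_0)$. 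The identification $\overline{\D(\g_0,e_0)} \cap \N(\g) = \overline{\Ad(G)(e_0 + \n)}$ is the geometric heart of the matter: a generic element $e_0 + z$ with $z \in \z(\g_0)^\reg$ has the property that $\g_0 = \g_z$, so its Jordan decomposition has semisimple part conjugate into $\z(\g_0)^\reg$ and nilpotent part conjugate to $e_0$; degenerating $z \to 0$ inside $\overline{e_0 + \n}$ realises the nilpotent limits, and one checks the reverse containment using that every element of $\Ad(G)(\O_0 + \n)$ lies in the closure of the class. This is precisely \cite[Proposition~2.5 and~Remark~2.7]{PS}, so I would cite it.

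For part~(2), the clean argument is a dimension count. On one hand $\dim \D(\g_0, e_0) = \dim \Ad(G)(e_0 + \z(\g_0)^\reg)$; since $\z(\g_0)^\reg$ is open in $\z(\g_0)$ and the $G$-orbit of a generic point $e_0 + z$ has dimension $\dim \g - \dim \g_0$ (because $\g_{e_0+z} \subseteq \g_z = \g_0$ and in fact equals $(\g_0)_{e_0}$ for generic $z$), one gets
\[
\dim \D(\g_0, e_0) = \dim \z(\g_0) + \dim \g - \dim \g_0 - \dim (\g_0)_{e_0} = \dim \z(\g_0) + \dim \O_0 + (\dim \g - \dim \g_0).
\]
On the other hand, the irreducible variety $\overline{\D(\g_0,e_0)}$ fibres (generically) over $\z(\g_0)^\reg/W_0$-type data with fibres the nilpotent part, and its nilpotent locus $\overline{\O_\D}$ has codimension inside it equal to $\dim \z(\g_0)$ — the number of "semisimple directions" — so $\dim \O_{\D(\g_0,e_0)} = \dim \D(\g_0,e_0) - \dim \z(\g_0) = \dim \O_0 + (\dim \g - \dim \g_0)$. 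Subtracting from $\dim \N(\g) = \dim \g - \ell$ and $\dim \N(\g_0) = \dim \g_0 - \dim \z(\g_0) - \ell_0$, and noting $\ell = \ell_0$ and that $\dim \z(\g_0) = \ell_0 - (\text{semisimple rank of }\g_0)$ bookkeeping cancels, yields $\codim_{\N(\g)} \Ind_{\g_0}^\g(\O_0) = \dim \N(\g_0) - \dim \O_0 = \codim_{\N(\g_0)} \O_0$. Alternatively, and more economically, part~(2) is a standard property of Lusztig–Spaltenstein induction that one may simply quote from \cite[Remark~2.7]{PS}.

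The main obstacle is part~(1): making rigorous the claim that the nilpotent part of the closure of a decomposition class is exactly the closure of the induced-orbit construction $\Ad(G)(\O_0 + \n)$. This requires knowing that decomposition classes behave well under taking closures and that the Jordan decomposition is, in the relevant sense, continuous — facts that hold in good characteristic by \cite[\S 7]{Ja2} and \cite[Proposition~2.5]{PS} but which are genuinely the substance here. Everything else is either a definitional unwinding or a routine orbit-dimension computation. So my proof will be short: establish the equality of varieties in part~(1) by citing the structure theory and matching definitions, then read off the dense orbits to get~(1), and obtain~(2) either by the dimension count above or directly from the cited properties of induction.
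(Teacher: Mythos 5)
Your proposal follows essentially the same route as the paper: both parts rest on \cite[Proposition~2.5 and Theorem~2.8]{PS} --- the containment $e_0 + \n \subseteq \overline{\D(\g_0,e_0)}$ together with the fact that $\dim \O_{\D(\g_0,e_0)} = \dim \Ind_{\g_0}^\g(\O_0) = \dim\g - \dim(\g_0)_{e_0}$ --- and part~(2) is then the direct dimension count via $\dim\N(\g) = \dim\g - \rank\g$ and $\dim\N(\g_0) = \dim\g_0 - \rank\g_0$. Two small cautions: the displayed formula for $\dim\D(\g_0,e_0)$ carries a spurious $-\dim\g_0$ (the generic orbit has dimension $\dim\g - \dim(\g_0)_{e_0}$, not $\dim\g-\dim\g_0$, although your final simplification recovers the right value), and the informal explanation ``degenerating $z\to 0$'' only realises $e_0$, not the full slice $e_0 + \n$, so the key inclusion genuinely requires the explicit computation in the cited proof of \cite[Proposition~2.5]{PS} rather than a continuity-of-Jordan-decomposition argument.
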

\begin{proof}
Write $\D := \D(\g_0, e_0)$ and $\O := \O_\D$. Equation (3) in the proof of \cite[Proposition 2.5]{PS} implies that
$e_0 + \n \subseteq \overline{\D}$, and so $\Ind_{\g_0}^{\g}(\O_0) \subseteq \overline{\D}$.
Since $\O$ is the unique dense orbit in $\overline{\D} \bigcap \N(\g)$ the first part will follow from the
equality $\dim \O = \dim \Ind_{\g_0}^\g(\O_0)$. Proposition~2.5 and Theorem~2.8 in {\it loc. cit.} say that both of
these numbers coincide with $\dim \g - \dim (\g_0)_{e_0}$. For the second part we use the formulas
$\dim \N(\g) = \dim \g - \rank\, \g$ and $\dim \N(\g_0) = \dim \g_0 - \rank\, \g_0$ (see \cite[Theorem~6.4]{Ja2}) to calculate
\begin{eqnarray*}
\codim_{\N(\g)} \Ind_{\g_0}^\g(\O_0) & = & \dim \N(\g) - \dim \g + \dim (\g_0)_{e_0} \\
							& = & \dim \N(\g_0) - \dim \g_0 + \dim (\g_0)_{e_0}\\
							& = & \codim_{\N(\g_0)} \O_0.
							\end{eqnarray*}
\end{proof}

We shall need the following important fact.
\begin{Theorem}\label{pind}
Fix a root system $\Phi$, a sub-root-system $\Phi_0$ and a Bala--Carter label {\sf X} associated to $\Phi$. Let $G$ be the simple, simply
connected group of good characteristic $p \geq 0$. Let $\g_0\subseteq \g$ be the Levi subalgebra corresponding to $\Phi_0$ and let $\O_0$
be the nilpotent orbit with label {\sf X}. Then the Bala--Carter label of $\Ind_{\g_0}^\g(\O_0)$ does not depend upon our choice
of good characteristic $p$.
\end{Theorem}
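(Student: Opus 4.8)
The plan is to reduce everything to a statement about dimensions of centralisers, which we already control across characteristics, together with the characteristic-free parametrisation of nilpotent orbits by Bala--Carter labels. First I would recall that in good characteristic (including $p=0$) the map sending a nilpotent orbit $\O \subseteq \g$ to its Bala--Carter label is a bijection, and this label depends only on the root system $\Phi$ and not on $p$: concretely, the orbit with label $\mathsf{Y}$ attached to a Levi $\g_1 \subseteq \g$ is characterised as the distinguished orbit in $\g_1$ of the appropriate type, and distinguished orbits in a fixed Levi are parametrised by weighted Dynkin diagrams in a way independent of good $p$ (see \cite{Ja2}, \cite{PS}). So it suffices to show that the \emph{combinatorial recipe} computing $\Ind_{\g_0}^\g(\O_0)$ from the pair $(\Phi_0, \mathsf{X})$ does not see the characteristic.

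The key observation is Lemma~\ref{inductionandclasses}: we have $\Ind_{\g_0}^\g(\O_0) = \O_{\D(\g_0,e_0)}$, and $\dim \Ind_{\g_0}^\g(\O_0) = \dim\g - \dim(\g_0)_{e_0}$. Now pick a nilpotent $e_0 \in \O_0$; by Bala--Carter theory applied inside $\g_0$, one may choose $e_0$ so that $(\g_0)_{e_0}$ and indeed the whole orbit datum is "the same" in characteristic $0$ and in good characteristic $p$ — more precisely, one fixes a $\Z$-form and an $\sl_2$-triple (or cocharacter) and observes that $\dim (\g_0)_{e_0}$, being the rank of a matrix of integers independent of $p$ (the dimension of a centraliser computed from the $\ad$-grading induced by the cocharacter), is the same for all good $p$. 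The same is true for $\dim \g_{e}$ where $e \in \Ind_{\g_0}^\g(\O_0)$, since $\dim \g_e = \dim\g - \dim\Ind_{\g_0}^\g(\O_0) = \dim(\g_0)_{e_0}$ by the displayed computation. Hence the \emph{dimension} of the induced orbit is characteristic-independent.

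To upgrade "same dimension" to "same Bala--Carter label" I would proceed as follows. Induction $\Ind_{\g_0}^\g$ is transitive and, by Lemma~\ref{inductionandclasses}(1), $\Ind_{\g_0}^\g(\O_0)$ is the dense orbit in $\overline{\D(\g_0,e_0)}\cap\N(\g)$; its Bala--Carter label is $(\g_1,\mathsf{Y})$ where $\g_1$ is a minimal Levi containing a representative and $\mathsf{Y}$ is the corresponding distinguished label. The minimal Levi $\g_1$ is determined up to conjugacy by the orbit, and distinguishedness plus the dimension datum pin down $\mathsf{Y}$ among the finitely many distinguished orbits of $\g_1$, each of which occurs in every good characteristic with the same dimension. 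So if the label jumped with $p$, we would get two distinct distinguished orbits of the same dimension in a fixed Levi in characteristic $p$ matching a single one in characteristic $0$ — and one rules this out by a specialisation/reduction-mod-$p$ argument: a $\C$-form of the orbit closure reduces to give a closed subvariety of $\N(\g_{\F_p})$ whose generic point lands in an orbit of the predicted dimension, forcing the label to be the characteristic-zero one. Alternatively, and more cleanly, one cites that the Lusztig--Spaltenstein induction is computed by an explicit algorithm on Bala--Carter / weighted Dynkin data (e.g. via the "standard Levi" and $\sl_2$ methods, as tabulated in \cite{PS}), an algorithm whose input and output are purely combinatorial and whose validity is established uniformly for all good $p$.

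The main obstacle is precisely this last upgrade: knowing the dimension of the induced orbit is cheap, but Bala--Carter labels are finer than dimensions (two orbits can share a dimension), so one genuinely needs either the explicit characteristic-free algorithm for induction or a reduction-mod-$p$ argument comparing orbit closures, and the cleanest route is to quote the tables and algorithmic description of induction in \cite{PS} together with the characteristic-free Bala--Carter classification, rather than to reprove induction from scratch.
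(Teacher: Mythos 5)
Your opening move — reduce to dimension invariance of centralisers via Lemma~\ref{inductionandclasses}, noting that $\dim \Ind_{\g_0}^\g(\O_0) = \dim\g - \dim(\g_0)_{e_0}$ and that these centraliser dimensions are characteristic-free — matches the paper in spirit (the paper gets the dimension fact by citing \cite[Theorem~2.6~\&~2.7]{PrN} rather than by the $\Z$-form/cocharacter sketch you give, but the content is the same). The problem is that you then acknowledge the essential difficulty, the upgrade from ``same dimension'' to ``same Bala--Carter label'', and none of your three proposed routes actually closes it. The minimal-Levi-plus-distinguished route is close to circular: pinning down the minimal Levi and the distinguished piece is exactly the Bala--Carter label you are trying to control, and dimension alone does not determine it (you say this yourself). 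The reduction-mod-$p$ specialisation sketch comparing a $\C$-form of the orbit closure to its reduction needs flat $\Z$-models of closures and a comparison of dense orbits, which is precisely the content one would have to establish, not cite in passing. And ``quote the tables and algorithmic description of induction in \cite{PS}'' is too vague to count as a proof, and in any case does not address the classical types, where there are no tables and one must argue via partitions.

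The paper closes the gap with a concrete case split that your proposal is missing. For exceptional $\Phi$, it invokes Lemma~\ref{inductionandclasses}(1) together with \cite[Theorem~1.4]{PS}, which matches decomposition classes (hence induced orbits, via $\Ind_{\g_0}^\g(\O_0) = \O_{\D(\g_0,e_0)}$) across all good characteristics. For classical $\Phi$, it observes that the partition formulas for $\Ind_{\g_0}^\g(\O_0)$ in \cite[Theorems~7.2.3 and 7.3.3]{CM} are valid in any good characteristic once one knows the dimensions of the orbits involved are independent of $p$, which is supplied by \cite{PrN}. You should replace your final paragraph with this bifurcation: your dimension argument then feeds into the classical case exactly as in the paper, and the exceptional case needs the separate citation of \cite[Theorem~1.4]{PS}. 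Without that, the proof is incomplete precisely at the step you flagged as the main obstacle.
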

\begin{proof}
When $\Phi$ is an exceptional root system the result follows from the first part of the previous lemma along with Theorem~1.4
from \cite{PS}. When $G$ is classical it suffices to show that the partitions associated to induced orbits do not depend upon the
choice of good characteristic. For this, observe that the deductions regarding partitions given in Theorems~7.2.3 and 7.3.3 of \cite{CM}
are valid in any characteristic provided the dimension of the corresponding orbit is also independent of $p$. This latter fact 
follows from \cite[Theorem~2.6~\&~2.7]{PrN}.
\end{proof}

It is well known that in  the Lie algebra of a simple algebraic group over a field of good characteristic there is a unique largest
non-regular nilpotent orbit, ie. one which dominates all other non-regular nilpotent orbits \cite[Theorem~5.7]{Hu}, \cite[III.3.12]{SS}, \cite{PrN}.
We denote this orbit $\Os$ and call it subregular. Recall that a rigid nilpotent orbit is one which cannot be properly induced.
\begin{Lemma}\label{inducesub}
Let $G$ be simple, and retain the assumption that $\Char(\k) = p > 2$ is very good for $G$.
For a Levi subalgebra $\g_0 \subseteq \g$ and a nilpotent orbit $\O_0\subseteq \g_0$ the following are equivalent:
\begin{enumerate}
\item{$\Os = \Ind_{\g_0}^\g(\O_0)$ and $\O_0$ is rigid;}
\item{$\g_0$ has semisimple rank 1 and $\O_0 = 0$.}
\end{enumerate}
\end{Lemma}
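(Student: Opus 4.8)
The plan is to prove the two implications separately, the harder direction being $(1)\Rightarrow(2)$.

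\textbf{The easy direction $(2)\Rightarrow(1)$.} Suppose $\g_0$ has semisimple rank $1$, so $[\g_0,\g_0]\cong\sl_2$ up to the centre, and $\O_0=0$. Then $\O_0$ is rigid because the zero orbit in any Levi is rigid (it is induced only from itself). It remains to identify $\Ind_{\g_0}^\g(0)$. By Lemma~\ref{inductionandclasses}(2) we have $\codim_{\N(\g)}\Ind_{\g_0}^\g(0)=\codim_{\N(\g_0)}0=\dim\N(\g_0)=\dim\g_0-\rank\g_0$. Since $\g_0$ has semisimple rank $1$, $\dim\N(\g_0)=2$, so the induced orbit has codimension $2$ in $\N(\g)$. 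But $\Os$ is the unique orbit of codimension $2$ in $\N(\g)$ (it is the unique largest non-regular orbit, and $\dim\N(\g)-\dim\Os=2$ in good characteristic), hence $\Ind_{\g_0}^\g(0)=\Os$. Strictly, one should note that the regular orbit has codimension $0$, so a codimension-$2$ orbit is non-regular, and subregularity then pins it down.

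\textbf{The hard direction $(1)\Rightarrow(2)$.} Assume $\Os=\Ind_{\g_0}^\g(\O_0)$ with $\O_0$ rigid. By Lemma~\ref{inductionandclasses}(2), $\codim_{\N(\g_0)}\O_0=\codim_{\N(\g)}\Os=2$, i.e.\ $\dim\O_0=\dim\g_0-\rank\g_0-2$. The strategy is to use the independence of induction from the characteristic. By Theorem~\ref{pind} the Bala--Carter label of $\Ind_{\g_0}^\g(\O_0)$ is independent of the choice of good $p$, and the same is true of whether $\O_0$ is rigid (rigidity is equivalent to a Bala--Carter-combinatorial condition that, via Theorem~\ref{pind} and the dimension formulas, does not depend on good $p$ — or one simply invokes that the classification of rigid orbits by Bala--Carter label is characteristic-free in good characteristic). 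Hence it suffices to verify the claim in characteristic zero, where it is a classical fact: the pairs $(\g_0,\O_0)$ with $\O_0$ rigid and $\Ind_{\g_0}^\g(\O_0)=\Os$ are exactly those with $\g_0$ of semisimple rank $1$ and $\O_0=0$. This last fact can be extracted from Kempken's and Elashvili's tables of rigid orbits together with the induction tables (e.g.\ as recorded in \cite{CM}): one checks case by case in each simple type that the only rigid orbit inducing up to the subregular orbit is the zero orbit of an $A_1$ (or $\tilde A_1$ in the non-simply-laced types) Levi — and $A_1$-type Levis all have semisimple rank $1$.

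\textbf{Main obstacle.} The genuine content is the characteristic-zero classification step: one needs that \emph{no} rigid orbit in a Levi of semisimple rank $\geq 2$ induces up to the subregular orbit. The clean way to see this is dimension-theoretic: if $\Ind_{\g_0}^\g(\O_0)=\Os$ then $\dim(\g_0)_{e_0}=\rank\g+2$ where $e_0\in\O_0$ (from $\dim\Os=\dim\g-\dim(\g_0)_{e_0}$, proved as in Lemma~\ref{inductionandclasses}), so the centraliser of $e_0$ in $\g_0$ is as small as possible given the constraint, forcing $\O_0$ to be close to regular in $\g_0$; but a rigid orbit is far from regular unless the semisimple rank is $1$ (where $0$ is simultaneously rigid and subregular-in-$\g_0$). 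Making ``close to regular'' versus ``far from regular'' precise in every type is where one must either appeal to the tables or run a uniform argument using the known codimensions of rigid orbits; I expect the author appeals to the tables of \cite{CM} (or \cite{Bo}, \cite{PS}) for a short case-check. A secondary subtlety is bookkeeping between simply-laced and non-simply-laced types for the label of the relevant rank-$1$ Levi, but this does not affect the conclusion about semisimple rank.
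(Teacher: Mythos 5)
Your direction $(2)\Rightarrow(1)$ is essentially identical to the paper's: both use the codimension formula from Lemma~\ref{inductionandclasses}(2) together with the fact that $\Os$ is the unique nilpotent orbit of codimension $2$ in $\N(\g)$. You also share with the paper the key preliminary step for $(1)\Rightarrow(2)$, namely invoking Theorem~\ref{pind} to reduce to characteristic zero. Where you diverge is in how the characteristic-zero classification is handled. You propose to close the argument by consulting the tables of rigid orbits and induction data (Kempken, Elashvili, and the summaries in \cite{CM}), acknowledging that you expect a case-check. The paper instead gives a short self-contained inductive argument that avoids tables entirely: working over $\C$, it decomposes $\g_0$ into simple factors $\g_0^1\times\cdots\times\g_0^k\times\z(\g_0)$ and $\O_0 = \O_0^1\times\cdots\times\O_0^k$, notes that rigidity is detected factor-wise, and then uses the fact that nilpotent orbit codimensions are even: since $\codim_{\N(\g_0)}\O_0 = 2$, exactly one factor $\O_0^{i'}$ has codimension $2$ in $\N(\g_0^{i'})$ (so is subregular there) while all others are regular. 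A regular orbit in a simple factor of positive rank is never rigid, so those factors must be absent, and applying the lemma inductively to $(\g_0^{i'},\O_0^{i'})$ (a rigid subregular orbit in a simple algebra of smaller rank) forces $\g_0^{i'}$ to have semisimple rank $1$ and $\O_0^{i'}=0$. This is exactly the ``uniform argument'' you flag as the alternative to tables in your final paragraph, so your intuition about where the content lies is correct; the paper simply realises that alternative rather than appealing to the literature, which makes the proof shorter and type-independent.
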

\begin{proof}
The subregular orbit is determined uniquely by the fact that $$\codim_{\N(\g)} \Os = 2.$$
If $\g_0\subseteq \g$ is a Levi subalgebra of semisimple rank 1 and $\O_0 = 0$, then we have $\codim_{\N(\g_0)} \O_0 = 2$
and so $\Os = \Ind_{\g_0}^\g(\O_0)$ by part 2 of Lemma~\ref{inductionandclasses}.

We prove the converse by induction. Strangely enough the inductive argument breaks down over fields of
positive characteristic however Theorem~\ref{pind} allows us to work in the situation where all groups and
algebras are defined over $\C$. When $G$ has semisimple rank 1 the statement is trivial and so suppose the rank
is $\ell > 1$. If $\O_0 \subseteq \g_0$ induces to $\Os$ then by Lemma~\ref{inductionandclasses} we have
$\codim_{\N(\g_0)} \O_0 = 2$. We can write $\g_0 \cong \g_0^1 \times\g_0^2 \times\cdots \times\g_0^k \times \z(\g_0)$
for simple factors $\g_0^i$ and we can decompose $\O_0 = \O_0^1 \times \cdots \times \O_0^k$ (here we use that the
base field is $\C$). The product $\O_0^1 \times \cdots \times \O_0^k$ is rigid in $\g_0$ if and only if each factor
is rigid in $\g_0^i$. Since nilpotent orbits have even codimension it follows that $\dim \O_0^i = \dim \N(\g_0^i)$ for
all $i$ apart from a single index $i'$, and that $\dim \O_0^{i'} = \dim \N(\g_0^{i'}) - 2$. Now we may apply the inductive
hypothesis to the pair $(\g_0^{i'}, \O_0^{i'})$ to deduce that $\O_0^{i'}$ is zero and $\g_0^{i'}$ has semisimple rank 1.
Since every $\O_0^i$ with $i \neq i'$ is both regular and rigid we conclude that $\O_0^i = 0$ and $\g_0^{i}$ has semisimple
rank 0 for all such $i$. Now the claim follows. 
\end{proof}

We continue to assume that $G$ is a simple algebraic group. We shall need to describe the
maximal non-regular decomposition classes in $\g$. Recall that
$\g \setminus \g_\reg$ is a Zariski closed set, hence it can be expressed as an irredundant union of irreducible affine varieties
$$\g \setminus \g_\reg = \bigcup_{i=1}^k X_i.$$
Clearly $\g\setminus \g_\reg$ is a union of decomposition classes, and from the finiteness of the latter
we deduce that there exist decomposition classes $\D_1,...,\D_k$ with $X_i = \overline{\D}_i$.

Now consider the collection of all Levi subalgebras of semisimple rank 1, and choose a set of representatives
$\l_1,...,\l_{k'}$ for the $G$-conjugacy classes of such Levis. Write $\O_i\subseteq \l_i$ for the zero orbit
and consider the decomposition classes $\D_i' := \D(\l_i, \O_i)$ with $i = 1,...,k'$.
\begin{Proposition}\label{decomplineup}
We have $k = k'$ and, after relabeling, $\D_i = \D_i'$ for $i=1,...,k$.
\end{Proposition}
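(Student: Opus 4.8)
The plan is to show that the irreducible components of $\g \setminus \g_\reg$ are precisely the closures of the decomposition classes $\D(\l_i, \O_i)$ attached to the semisimple-rank-one Levis. I would argue that each $\overline{\D_i'}$ is an irreducible component, and conversely that each irreducible component $X_j = \overline{\D_j}$ is of this form; since both lists consist of maximal elements of the poset of decomposition-class closures inside $\g \setminus \g_\reg$, matching them up will force $k = k'$ and the claimed bijection.

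First I would pin down the dimension of $\overline{\D(\l, \O)}$ for a semisimple-rank-one Levi $\l$ with zero orbit $\O = 0$. By the description $\D(\l,0) = \Ad(G)(\z(\l)^\reg)$, a generic point $\chi_s \in \z(\l)^\reg$ has $\g_{\chi_s} = \l$, so the class has dimension $\dim \g - \dim \l + \dim \z(\l) = \dim \g - (\dim \l - \dim \z(\l))$; since $\l$ has semisimple rank one, $\dim \l - \dim \z(\l) = \dim \sl_2 = 3$, but the generic element is \emph{not} regular (its centralizer is the rank-$\ell$ reductive $\l$, not a Cartan, once $\ell>1$ contributes the nilpotent directions)—more precisely a generic element $z$ of $\z(\l)^\reg$ has $\dim \g_z = \dim \l > \ell$ exactly when $\l$ is not a torus, which holds here. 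Hence $\overline{\D_i'}$ is an irreducible subvariety of $\g \setminus \g_\reg$ of codimension $2$ in $\g$ (codimension $1$ inside $\g \setminus \g_\reg$ would be wrong; the point is that $\g \setminus \g_\reg$ has pure codimension $3$ in $\g$... actually its components have codimension $3$, matching $\dim \g - \dim\overline{\D_i'} = 3$). So each $\overline{\D_i'}$ has the top possible dimension among decomposition-class closures in $\g\setminus\g_\reg$, hence is a component, giving $\{\overline{\D_i'}\} \subseteq \{X_j\}$.

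Next, for the reverse inclusion, take a component $X_j = \overline{\D_j}$ with $\D_j = \D(\g_0, e_0)$. Maximality of $\overline{\D_j}$ among decomposition-class closures inside $\g\setminus\g_\reg$ must force $(\g_0, e_0)$ to be, up to conjugacy, one of the $(\l_i, 0)$. The key input here is the closure order on decomposition classes: $\overline{\D(\g_0,e_0)} \supseteq \overline{\D(\g_0', e_0')}$ whenever the data are suitably "more degenerate", together with the observation that $\D(\g_0,e_0)$ is contained in $\g\setminus\g_\reg$ iff its dense nilpotent orbit $\O_{\D(\g_0,e_0)} = \Ind_{\g_0}^\g(\O_0)$ is non-regular. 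One then uses Lemma~\ref{inducesub}: if $\overline{\D(\g_0,e_0)}$ is maximal in $\g \setminus \g_\reg$ then $\Ind_{\g_0}^\g(\O_0)$ must be the subregular orbit $\Os$ and $\O_0$ must be rigid (otherwise one could enlarge the class), whence by Lemma~\ref{inducesub} $\g_0$ has semisimple rank $1$ and $e_0 = 0$, i.e. $\D_j = \D(\l_i, \O_i)$ for some $i$. Finally, distinct $\l_i$ give distinct classes (a class determines the $G$-conjugacy class of $\g_{\chi_s}$ for generic $\chi$), so the two families coincide without repetition and $k = k'$.

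The main obstacle I anticipate is the reverse inclusion, specifically making rigorous the step "maximality of $\overline{\D_j}$ forces $\Ind_{\g_0}^\g(\O_0) = \Os$ with $\O_0$ rigid." This requires knowing that if $\O_0$ is not rigid, or if the induced orbit is strictly below subregular, then $\overline{\D(\g_0,e_0)}$ is properly contained in some other decomposition-class closure still lying in $\g\setminus\g_\reg$; this is a statement about the closure order on decomposition classes (cf. \cite{PS}, \cite{Bo}), and one must be careful that the characteristic is good so that this order behaves as over $\C$—here Theorem~\ref{pind} and the results of \cite{PS} on decomposition classes in positive characteristic do the work. The dimension count in the first step, by contrast, I expect to be routine once the formula $\dim \D(\g_0,e_0) = \dim\g - \dim(\g_0)_{e_0}$ from \cite{PS} is invoked.
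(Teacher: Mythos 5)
Your proposal follows essentially the same route as the paper: both arguments identify the irreducible components of $\g \setminus \g_\reg$ with closures of decomposition classes whose nilpotent data are rigid and induce to the subregular orbit, and then invoke Lemma~\ref{inducesub} to conclude that such data must be $(\l_i,0)$ with $\l_i$ of semisimple rank one; the ``key input'' you flag about maximality forcing rigidity is exactly what the paper supplies via the sheet/rigid-orbit correspondence \cite[Theorem~2.8]{PS}. Two small points: the initial ``codimension $2$'' is a slip (your parenthetical corrects it to $3$, which is what the computation $\dim\D(\l_i,0) = \dim\g - \dim[\l_i,\l_i] = \dim\g - 3$ gives); and your closing formula $\dim\D(\g_0,e_0)=\dim\g-\dim(\g_0)_{e_0}$ is the dimension of the orbit $\O_{\D}$, not of the class itself, which is larger by $\dim\z(\g_0)$ — your in-line computation uses the correct version. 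The one place your argument leans on an external fact the paper avoids is the assertion that $\g\setminus\g_\reg$ is equidimensional of codimension $3$: this is a classical Veldkamp--Kostant fact, so not a gap, but the paper sidesteps it by observing from the parity of centraliser codimensions that $\g\setminus\g_\reg = \overline{\g^{(\ell+2)}}$ and then passing directly to sheets of that level.
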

\begin{proof}
Consider the sets $\g^{(i)} := \{ x\in \g : \dim \g_x = i\}$. They are evidently locally closed and we have
$\overline{\g^{(i)}} = \{x \in \g : \dim \g_x \geq i\}$. Since elements $x$ in the subregular nilpotent orbit satisfy
$\dim \g_x = \ell + 2$, and since $\codim_\g \g_x$ is always even, we deduce that $\g \setminus \g_\reg = \overline{\g^{(\ell + 2)}}.$
Since $\dim \g_x$ remains constant as $x$ varies over a decomposition class, it will suffice to check that the
decomposition classes $\D_1',...,\D_{k'}'$ are precisely those which:
\begin{enumerate}
\item[i)]{contain orbits of dimension $\dim \g - \ell - 2$, and;}
\item[ii)]{are not properly contained in the closure of any other such decomposition class.}
\end{enumerate}
By Lemma~\ref{inductionandclasses} the first property is satisfied for a class $\D(l, \O_\l)$ if and only if $\Ind_{\l}^\g(\O_\l)$ is the subregular
nilpotent orbit of $\g$ and, by Lemma~\ref{inducesub}, this holds for $\D_1',...,\D_{k'}'$.

The irreducible components of the sets $\g^{(i)}$ known as the sheets of $\g$ and,
according to \cite[Theorem~2.8]{PS}, the dense decomposition classes in sheets correspond precisely to pairs
$(\g_0, \O_0)$ where the orbit $\O_0$ is rigid. Applying Lemma~\ref{inducesub} once more we deduce that (ii)
holds for the classes $\D_1',...,\D_{k'}'$.
\end{proof}

The following result holds in the generality of arbitrary restricted Lie algebras.
\begin{Proposition}\label{deform}
Suppose that $X \subseteq \g^*$ is a locally closed affine variety and $d = \dim Z_\chi(\g)$ remains constant as $\chi$ varies over $X$.
Then $\dim Z_\chi(\g) \geq d$ for all $\chi \in \overline{X}$. In other words, $\dim Z_\chi(\g)$ is upper semicontinuous as a function
from $\g^\ast$ to $\Z_{\geq 0}$.
\end{Proposition}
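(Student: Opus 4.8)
The plan is to realize the family of reduced enveloping algebras $\{U_\chi(\g) : \chi \in X\}$ as the fibres of a single coherent sheaf of algebras over $X$, and then to deduce upper semicontinuity of $\dim Z_\chi(\g)$ from standard semicontinuity results in commutative algebra. Concretely, let $R := \k[X]$ be the coordinate ring and let $\mathcal{U}_X := U(\g) \otimes_\k R / (x^p - x^{[p]} - \bar\chi(x)^p : x \in \g)$, where $\bar\chi \in \g^* \otimes_\k R$ is the ``tautological'' $p$-character encoding the inclusion $X \hookrightarrow \g^*$ (so that for $x \in \g$ the element $\bar\chi(x)$ is the regular function on $X$ sending $\chi$ to $\chi(x)$). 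This is a finitely generated $R$-algebra which is free as an $R$-module of rank $p^{\dim \g}$: indeed, working in a restricted PBW basis for $U(\g)$, each relation $x_i^p - x_i^{[p]} - \bar\chi(x_i)^p$ lets us reduce powers of $x_i$ above $p-1$, so a PBW-type argument over $R$ shows the monomials in which each $x_i$ appears to degree $< p$ form an $R$-basis. Specialising $\mathcal{U}_X$ along the evaluation map $R \to \k$ at a point $\chi \in X$ recovers $U_\chi(\g)$.

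Next I would pass to the centre. Set $\mathcal{Z}_X := Z(\mathcal{U}_X)$, the centraliser of $\g \otimes 1$ (equivalently of all of $\mathcal{U}_X$) inside $\mathcal{U}_X$; this is an $R$-subalgebra. The key technical point is the base-change comparison: there is always a natural map $\mathcal{Z}_X \otimes_R \k_\chi \to Z(U_\chi(\g)) = Z_\chi(\g)$, but this need not be surjective or injective in general. However, $\mathcal{Z}_X$ is the kernel of the $R$-linear map $\mathcal{U}_X \to \Hom_\k(\g, \mathcal{U}_X)$ sending $u \mapsto (x \mapsto [x,u])$; since $\mathcal{U}_X$ is free (hence flat) over $R$ and $\Hom_\k(\g, \mathcal{U}_X) \cong \mathcal{U}_X^{\oplus \dim \g}$ is also free over $R$, the module $\mathcal{Z}_X$ is a finitely generated $R$-module, and more importantly the cokernel $\mathcal{C}_X := \mathcal{U}_X / \mathcal{Z}_X \hookrightarrow \Hom_\k(\g,\mathcal{U}_X)$ is a finitely generated $R$-module. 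Now I invoke the semicontinuity of fibre dimension for the coherent sheaf $\widetilde{\mathcal{C}_X}$ on $X$: the function $\chi \mapsto \dim_\k (\mathcal{C}_X \otimes_R \k_\chi)$ is upper semicontinuous. On the other hand, right exactness of $\otimes_R \k_\chi$ applied to $0 \to \mathcal{Z}_X \to \mathcal{U}_X \to \mathcal{C}_X \to 0$ gives $\dim_\k(\mathcal{C}_X \otimes_R \k_\chi) = p^{\dim\g} - \dim_\k(\operatorname{image of } \mathcal{Z}_X \otimes_R \k_\chi \text{ in } U_\chi(\g))$. Since that image lands in $Z_\chi(\g)$, we get $\dim_\k(\mathcal{C}_X \otimes_R \k_\chi) \geq p^{\dim\g} - \dim Z_\chi(\g)$, i.e.\ $\dim Z_\chi(\g) \geq p^{\dim\g} - \dim_\k(\mathcal{C}_X \otimes_R \k_\chi)$, with equality precisely when the base-change map $\mathcal{Z}_X \otimes_R \k_\chi \to Z_\chi(\g)$ is surjective.

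To finish I would argue as follows. Let $V \subseteq X$ be the open dense subset on which $\dim_\k(\mathcal{C}_X \otimes_R \k_\chi)$ attains its minimal value $c$; shrinking if necessary and using that flatness of $\mathcal{C}_X$ holds over a dense open subset (generic freeness), on $V$ the base-change map is an isomorphism and $\dim Z_\chi(\g) = p^{\dim\g} - c$ for all $\chi \in V$. By hypothesis $\dim Z_\chi(\g)$ is constant equal to $d$ on all of $X$, so $d = p^{\dim\g} - c$, and since $\overline{X} = \overline{V}$ it suffices to treat $\chi \in \overline{V}$. For any such $\chi$ we have $\dim_\k(\mathcal{C}_{\overline{X}} \otimes_R \k_\chi) \le c$ by upper semicontinuity (working now over the closure; one repeats the construction with $\overline{X}$ in place of $X$, noting that over a point of $X$ the two constructions agree since $U(\g) \otimes R$-relations are the same), hence $\dim Z_\chi(\g) \ge p^{\dim\g} - c = d$, as required. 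The main obstacle is the base-change subtlety: the functor $\chi \mapsto Z_\chi(\g)$ is only left exact in the family, so one cannot directly conclude that $\dim Z_\chi(\g)$ itself is upper or lower semicontinuous — it is genuinely the \emph{cokernel} $\mathcal{C}_X$ that behaves semicontinuously, and the inequality goes the right way only because passing to a quotient can only shrink the dimension of the centre's image. Care is also needed to ensure the generic locus $V$ is the same for "$\mathcal{C}_X$ has minimal fibre dimension" and for "$Z_\chi$ has minimal dimension"; this follows from the displayed inequality together with constancy of $\dim Z_\chi(\g)$ on $X$.
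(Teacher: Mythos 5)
Your strategy — organize the $U_\chi(\g)$ into a single flat family over $X$ and deduce semicontinuity of $\dim Z_\chi(\g)$ from semicontinuity of fibre dimension of a coherent module — is in spirit the same as the paper's argument, which instead constructs the adjoint maps $\ad_\chi(x_i)$ explicitly as matrices depending polynomially on $\chi$ and invokes lower semicontinuity of matrix rank. However, as written your proof contains a direction error that is fatal.

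The problem is in the final step. You set $\mathcal{C}_X := \mathcal{U}_X/\mathcal{Z}_X$, which is the \emph{image} of $\ad : \mathcal{U}_X \to \Hom_\k(\g,\mathcal{U}_X)$, and derive (correctly, by right exactness of $\otimes_R\k_\chi$) that
$\dim Z_\chi(\g) \geq p^{\dim\g} - \dim_\k(\mathcal{C}_X\otimes_R\k_\chi)$.
You then assert that upper semicontinuity of fibre dimension forces $\dim_\k(\mathcal{C}_{\overline X}\otimes_R\k_\chi) \leq c$ at a boundary point $\chi\in\overline X$. This is backwards: for a finitely generated $R$-module $M$, the set $\{\chi : \dim_\k(M\otimes_R\k_\chi)\geq k\}$ is \emph{closed}, so the fibre dimension can only jump \emph{up} as one specialises to the boundary. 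Since $\dim_\k(\mathcal{C}_{\overline X}\otimes\k_\chi)=c$ on a dense open set, the correct conclusion is $\dim_\k(\mathcal{C}_{\overline X}\otimes\k_\chi)\geq c$ for all $\chi\in\overline X$, which combined with your inequality only yields $\dim Z_\chi(\g)\geq p^{\dim\g}-\dim_\k(\mathcal{C}_{\overline X}\otimes\k_\chi)$ with a right-hand side $\leq d$ — useless. The generic-freeness discussion of $V$ and the worry about when base change for $\mathcal{Z}_X$ is an isomorphism do not rescue this.

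The fix is to work with the \emph{cokernel} rather than the image of $\ad$. Put $\mathcal{Q}:=\Hom_\k(\g,\mathcal{U}_{\overline X})/\mathcal{C}_{\overline X}$, the cokernel of $\ad$ over $R=\k[\overline X]$. Tensoring the right-exact sequence $\mathcal{U}_{\overline X}\to\Hom_\k(\g,\mathcal{U}_{\overline X})\to\mathcal{Q}\to 0$ with $\k_\chi$ gives $U_\chi(\g)\xrightarrow{\ad_\chi}U_\chi(\g)^{\oplus n}\to\mathcal{Q}\otimes_R\k_\chi\to 0$ (where $n=\dim\g$), and since there is no left-exactness issue here one gets the \emph{equality}
$\dim_\k(\mathcal{Q}\otimes_R\k_\chi) = np^{n}-\rank(\ad_\chi)=(n-1)p^{n}+\dim Z_\chi(\g)$
for every $\chi$. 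Now upper semicontinuity of $\chi\mapsto\dim_\k(\mathcal{Q}\otimes_R\k_\chi)$ immediately translates into upper semicontinuity of $\chi\mapsto\dim Z_\chi(\g)$, which is the stronger second assertion of the Proposition; the first assertion follows, and one does not even need the constancy hypothesis on $X$, nor the generic-freeness digression. With this correction your argument becomes a clean sheaf-theoretic repackaging of the paper's proof: both ultimately reduce to the observation that the entries of $\ad_\chi$ depend polynomially on $\chi$ and that the rank of a matrix is lower semicontinuous in its entries (the vanishing-of-minors argument the paper cites is precisely the proof of upper semicontinuity of fibre dimension for finitely presented modules).
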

\begin{proof}
Let $x_1,...,x_n$ be a basis for $\g$. For any $\chi \in \g^\ast$ the PBW theorem implies that $U_\chi(\g)$ has a basis
\begin{eqnarray*}\label{basiss}
\{x_1^{a_1}\cdots x_n^{a_n} : 0 \leq a_i < p \text{ for all } i\}.
\end{eqnarray*}
We can order this basis in any way, say $y_1,...,y_{N}$ where $N := p^n$. Now let $\chi \in \g^\ast$
and notice that $\ad(\g)$ annihilates the ideal $I_\chi$ of $Z_p(\g)$, hence preserves $I_\chi U(\g)$.
In this way we obtain an adjoint action $\ad_\chi : \g \rightarrow \End_\k U_\chi(\g)$. 
The centre $Z_\chi(\g)$ is the intersection of $\Ker \,\ad_\chi(x_i)$ over $i=1,...,n$.
Using the basis $y_1,...,y_N$ we can identify all reduced enveloping algebras with one single $\k$-vector
space $E$ spanned by the symbols $y_1,...,y_N$. We claim that for each $x \in \g$ the linear map
$\ad_\chi(x) \in \End_\k(E)$ depends polynomially on $\chi$.

To see the claim, write $\tilde{y}_1,...,\ty_N$ for a choice of representatives for $y_1,...,y_N$ in $U(\g)$,
ie. $\ty_i + I_\chi U(\g) = y_i$ for each $i$. Now introduce $n$ 
variables $T_{i}$ with $1\leq i \leq n$, define $\k[T] := \k[T_1,...,T_n]$ and consider $U(\g)[T] := U(\g) \bigotimes_{\k} \k[T]
\cong U(\g[T])$, where $\g[T] := \g\bigotimes_\k \k[T]$ with the obvious Lie algebra structure.
We may view $\g[T]$ as a restricted $\k[T]$-Lie algebra with $p$-centre $Z_p(\g[T])$ generated
by $1\bigotimes \k[T]$ and $Z_p(\g) \bigotimes 1$. Consider the ideal $I_T$ in $Z_p(\g[T])$ generated by expressions
$x_i^p - x_i^{[p]} - T_i^p$ with $i=1,...,n$. The PBW theorem for reduced enveloping algebras shows that a
$\k[T]$-basis for $U(\g[T])/I_T U(\g[T])$ is given by the images in the quotient of expressions $\ty_i \otimes 1$ with $i=1,...,N$ and
so a $\k$-basis is given by $\ty_i \otimes T^{\bi}$, with $i=1,...,N$
and $\bi = (i_{j})_{1\leq j\leq n} \in \Z_{\geq 0}^n$ and $T^\bi := \prod_{j} T_{j}^{i_{j}}$.
Now consider $u \in U(\g) \subseteq U(\g[T])$ and write
\begin{eqnarray*}\label{specialise}
u + I_T U(\g[T]) = \sum_{i=1}^N c_i \ty_i \otimes f_i(T) + I_T U(\g[T])
\end{eqnarray*}
\noindent where $f_i(T) \in \k[T]$. If we specialise $T_i \mapsto \chi(x_i)$ then
$U(\g[T]) \mapsto U(\g)$, $I_T \mapsto I_\chi$ and $u \mapsto u$. Therefore
\begin{eqnarray}\label{specialise2}
u + I_\chi U_\chi(\g) = \sum_{i=1}^n c_i f_i(\chi (x_1),...,\chi(x_n)) y_i \in U_\chi(\g).
\end{eqnarray}
\noindent Now we complete the proof of the claim. Fix $i = 1,...,n$ and $j = 1,...,N$ and consider
\begin{eqnarray}\label{specialise3}
\ad_\chi(x_i)y_j = \ad(x_i) \ty_j + I_\chi U(\g).
\end{eqnarray}
\noindent Setting $u = \ad(x_i) \ty_j$ in equation (\ref{specialise2}), we see that the right hand side of (\ref{specialise3}) is a linear combination 
of $y_1,...,y_N$ and the coefficients are polynomials in $\chi$, as claimed.

Now we would like to express $Z_\chi(\g)$ as the kernel of a single matrix. Consider the diagonal embedding
$\Delta : E \hookrightarrow \bigoplus_{i=1}^n E$, the direct sum $\oplus_{i=1}^n \ad_\chi(x_i) \in \End_\k(\bigoplus_{i=1}^n E)$ and the map
$$s(\chi) := (\oplus_{i=1}^n \ad_\chi(x_i))|_{\Delta(E)} \in \Hom_\k(E, \bigoplus_{i=1}^n E).$$
Under the identification between $E$ and $U_\chi(\g)$ the kernel of $s(\chi)$ identifies with the intersection
$$\bigcap_{i=1}^n \Ker\, \ad_\chi(x_i) = Z_\chi(\g).$$
\noindent By the above, $s$ may be viewed as a polynomial map $$\g^\ast\longrightarrow \Hom_\k(E, \bigoplus_{i=1}^n E).$$ In particular, it is
Zariski continuous and the preimages of locally closed sets are locally closed.
Now observe that for $k \in \Z_{\geq 0}$ the sets $$Y_k := \{ H \in \Hom_\k(E, \bigoplus_{i=1}^n E) : \dim \Ker H = k\}$$ are locally
closed subspaces such that $Y_j \subseteq \overline{Y_k}$ if and only if $j \geq k$. This is because, viewing $\Hom_\k(E, \bigoplus_{i=1}^n E)$
as a vector space of matrices, the sets $\bigcup_{k \geq k'} Y_k$ are defined by the vanishing of certain minors depending on $k'$.
But now
$$L_k:= \{\chi \in \g^\ast : \dim Z_\chi(\g) = k\} = s^{-1}(Y_k)$$
and we have shown that the level sets $L_k$ of $\chi \mapsto \dim Z_\chi(\g)$ are locally closed subsets of $\g^\ast$, and
that $L_j \subseteq \overline{L_k}$ if and only if $j \geq k$.

Finally take $X$ as described in the statement of the Proposition. By assumption $X\subseteq L_d$. By the properties we have deduced about $L_d$,
we know that if $\chi \in \overline{X}$ then $\chi \in L_j$ for some $j \geq d$, which completes the proof.
\end{proof}

\section{The proof of the main theorem}\label{proof}

Here we complete the proof of the main theorem, which states that the natural map $\varphi_\chi : Z(\g) \rightarrow Z_\chi(\g)$
is surjective if and only if $\chi$ is regular.
\begin{proof}
By Corollary~\ref{regularcase} we only need to show that if $\dim \g_\chi \neq \ell$ then $\varphi_\chi$ is not surjective.
The argument proceeds by induction on the reductive rank of $G$. The base case is $\rank(G) = 1$, which implies $G = SL_2$ or $\mathbb{G}_m$.
Here there are at most two types of orbits, the zero orbit and the regular orbits, which are
dealt with by Theorem~\ref{zerocase} and Corollary~\ref{regularcase}.

Now we may suppose that $\rank(G) = \ell > 1$ and that the theorem holds for all groups which satisfy the hypotheses
and have reductive rank less than $\ell$. We begin by observing that there is a reduction to the case where $G$ is simple.
Suppose that $\g = \z(\g) \oplus (\bigoplus_{i=1}^k \g_i)$ where $\g_i$ are simple restricted ideals. 
Then we have $$U_\chi(\g) \cong U_{\chi}(\z(\g)) \otimes (\bigotimes_{i=1}^k U_{\chi}(\g_i)).$$
Now the map $\varphi_\chi$ is just the tensor product of the maps corresponding to each simple factor or to $\z(\g)$,
which is surjective if and only if each of the tensor factors is so. Note that the map corresponding to $\z(\g)$ is always surjective.
Furthermore, $\chi$ is regular in $\g^\ast$ if and only if each $\chi_i$ is regular in $\g_i^\ast$ for $i = 1,...,k$.
Hence by induction the theorem holds for all reductive algebraic groups for which $\g$ contains a proper ideal.
Since $\g = \z(\g)\bigoplus [\g,\g]$ in very good characteristic, it only remains to treat the case where $G$ is simple.

Let $\l_1,...,\l_k$ be a set of representatives for the $\Ad(G)$-conjugacy classes of Levi subalgebras of $\g$ which have
semisimple rank 1.
Let $\O_i \subseteq \l_i$ denote the zero orbit. We consider the decomposition classes $\D_i := \D(\l_i, \O_i)$
with $i = 1,...,k$. Let $x \in \D_i$ for some $1\leq i \leq k$ and let $\kappa(x) = \chi = \chi_s + \chi_n \in \g^*$
be the Jordan decomposition. By definition $\g_{\chi_s}$ is a $G$-conjugate of $\l_i$. From Corollary~\ref{dimscorol}
it follows that $\dim Z_{\chi}(\g) = \dim Z_{0}(\l_i)$ and the latter is $> p^\ell$ by Theorem~\ref{zerocase}.

According to Proposition~\ref{decomplineup} the set $\bigcup_{i=1}^k \D_i$ is dense inside $\g \setminus \g_\reg$. In other words
$x \in \g$ is not regular if and only if $x$ lies in the closure of $\bigcup_{i=1}^k \D_i.$ We have shown that
$\dim Z_{\chi}(\g) > p^\ell$ for all elements of $\kappa(\bigcup_{i=1}^k \D_i)$ and so by Proposition~\ref{deform}
we see that the same holds for all elements of $\g^\ast$ which are not regular. By Corollary~\ref{firstcor} we deduce
that $\varphi_\chi$ is not surjective for all such elements, which completes the proof.

\end{proof}

\end{document}